\documentclass[12pt,oneside]{article}

\usepackage{quotes}
\usepackage{amssymb}
\usepackage{amsmath}
\usepackage{tikz}
\usetikzlibrary{shapes,arrows,positioning}
\usepackage[letterpaper,left=1in,right=1in, bottom=1in, top=1in]{geometry}
\usepackage{titlesec}
\usepackage{setspace}
\usepackage{textcomp}
\usepackage{amsthm}
\usepackage{eso-pic}
\usepackage{graphicx}
\usepackage{color}
\usepackage{longtable}
\usepackage{transparent}
\usepackage{caption}
\usepackage{hyperref}

\renewcommand{\geq}{\geqslant}
\renewcommand{\leq}{\leqslant}

\newcommand{\tab}{\hspace{1cm}}

\newcommand{\ind}{\text{\normalfont{ind}}}

\titleformat{\section}{\bfseries\centering}{\thesection \hspace{0.5cm}}{12pt}{}

\titleformat{\subsection}{\bfseries}{\thesubsection \hspace{0.5cm}}{12pt}{}

\newtheorem{theorem}{Theorem}[section]
\newtheorem{lemma}{Lemma}[section]
\newtheorem{proposition}{Proposition}[section]
\newtheorem{corollary}{Corollary}[section]
\newtheorem{remark}{Remark}[section]
\newtheorem{definition}{Definition}[section]

\newtheorem{property}{Property}[section]

\begin{document}\setlength{\parindent}{0cm}\thispagestyle{empty}
	\begin{center}
		\Large \textbf{On Arithmetic Cordial Labeling of Product Graphs}
	\end{center}
	\vspace{2mm}
	\begin{center}
		
	Jason D. Andoyo
	
	\end{center}
	
	\section*{Abstract}\small
	\tab Let $\eta$ be a fixed positive integer. Let $S$ be a subset of $\mathbb{Z}$, $\star:S\times S\to \mathbb{Z}$ be a binary function, and $\zeta_{\eta}:\{\xi\in \mathbb{Z}:\gcd(\xi,\eta)=1\}\to \{0,1\}$ be a function. For a simple graph $G$ of order $n$, a bijective function $f:V(G)\to S$ (where $|S|=n$) is called an arithmetic cordial labeling modulo $\eta$ under the arithmetic structure $\langle S,\zeta_\eta,\star\rangle$ if the induced function $f_\eta^*:E(G)\to \{0,1\}$, defined by $f_\eta^*(ab)=1$ whenever $\gcd(f(a)\star f(b),\eta)= 1$ and $\zeta_\eta(f(a)\star f(b))=1$; otherwise, $f_\eta^*(ab)=0$, satisfies the condition $|e_{f_\eta^*}(0)-e_{f_\eta^*}(1)|\leq 1$, where $e_{f_\eta^*}(i)$ is the number of edges with label $i$ ($i=0,1$). In this paper, the arithmetic cordial labeling of product graphs, namely, corona, lexicographic, cartesian, tensor, and strong, is explored under the operation of addition.
	\normalsize
	\vspace{0.3cm}
	
	\textbf{Keywords:} binary function; arithmetic cordial labeling; arithmetic structure; product graphs
	
	\textbf{MSC 2020:} 05C76, 05C78, 11A05, 11A07, 11A25.
	
	\section{Introduction}
	
	A simple graph $G$ is defined as an ordered pair $(V,E)$, where $V$ and $E$ are called the vertex set and edge set, respectively, such that $E$ is the set of unordered pairs of distinct elements of $V$. The elements of $V$ and $E$ are called vertices and edges, respectively, and the cardinalities $|V|$ and $|E|$ are called the order and size, respectively. If $v\in V(G)$, the degree of $v$, denoted by $\deg(v)$, is the number of vertices of $G$ that are adjacent to $v$. If $\deg(v)=1$, then we call $v$ a pendant vertex of $G$. 
	
	Graph labeling is a central concept in graph theory that involves assigning labels—such as numbers, sets, or any well-defined mathematical objects--to the vertices, edges, or both, subject to specific conditions \cite{Gallian}. A well-known variation is cordial labeling, which was first introduced by I. Cahit \cite{Cahit}. This foundational concept has inspired numerous variants, including those explored in \cite{Andoyo1,Andoyo2,Prudencio,Andoyo3,Andoyo4}. 
	
	Ultimately, these studies led to the introduction of arithmetic cordial labeling in \cite{Andoyo5}. While this initial framework established foundational properties and demonstrated the existence of this labeling for several graphs--including corona graphs and the tensor product $K_p \times G$ where $K_p$ is a complete graph of order $p$ and $G$ is a bipartite graph--a systematic investigation into the general form of graph products under the operation of addition remained open. In this study, we establish conditions under which various fundamental product graphs—namely, the corona, lexicographic, cartesian, tensor, and strong products—admit arithmetic cordial labeling under the operation of addition.
	Furthermore, we extend the definition of arithmetic cordial labeling to include all simple graphs, removing the previous restriction that the graph must be connected. This extension is necessary because graph operations like the tensor product can produce disconnected graphs even when the starting graphs are connected. By using this general definition, we can analyze both connected and disconnected product graphs under a single framework.
	
	Suppose that $\eta$ is a fixed positive integer and $G$ is a simple graph with $n$ vertices. Let $S$ be a subset of $\mathbb{Z}$ with $|S|=n$, $\star:S\times S\to \mathbb{Z}$ be a binary function, and $\zeta_\eta:\{\xi: \gcd(\xi,\eta)=1\}\to\{0,1\}$ be a function. The triple $\langle S,\zeta_\eta,\star\rangle$ is called \textit{arithmetic structure}. A bijective function $f:V(G)\to S$ is called \textit{arithmetic cordial labeling modulo $\eta$ under $\langle S,\zeta_\eta,\star\rangle$} whenever the induced function $f_\eta^*:E(G)\to \{0,1\}$, defined by
	$$f_\eta^*(uv)=\begin{cases}
		1&\text{ if }\gcd(f(u)\star f(v),\eta)=1\text{ and }\zeta_\eta(f(u)\star f(v))=1\\
		0&\text{ otherwise},
	\end{cases}$$
	satisfies the condition $|e_{f_\eta^*}(0)-e_{f_\eta^*}(1)|\leq 1$, where $e_{f_\eta^*}(i)$ is the number of edges with label $i$ ($i=0,1$). If a graph admits this labeling, we call it \textit{arithmetic cordial graph modulo $\eta$ under $\langle S,\zeta_\eta,\star\rangle$}. Furthermore, the function $f$ is called a \textit{perfect (positive) (negative) arithmetic cordial labeling modulo $\eta$ under $\langle S,\zeta_\eta,\star\rangle$} whenever $e_{f_\eta^*}(0)-e_{f_\eta^*}(1)=0$ ($e_{f_\eta^*}(0)-e_{f_\eta^*}(1)=1$) ($e_{f_\eta^*}(0)-e_{f_\eta^*}(1)=-1$). If a graph admits this labeling, then the graph is called \textit{perfect (positive) (negative) arithmetic cordial graph modulo $\eta$ under $\langle S,\zeta_\eta,\star\rangle$}. 
	\section{Basic Concepts}
	
	\begin{definition}
		A path graph $P_n$ of order $n$ and size $n-1$ is a graph with vertex set $V(P_n)=\{v_1,v_2,\ldots,v_n\}$ and edge set $E(P_n)=\{v_1v_2,v_2v_3,\ldots,v_{n-1}v_n\}$. Moreover, a cycle graph $C_n$ of order $n$ and size $n$ is obtained from path graph $P_n$ with an additional edge $v_1v_n$
	\end{definition}
	
	\begin{definition}
		A tadpole graph $T_{n,m}$ of order $n+m$ and size $n+m$ is obtained from a cycle graph $C_n$ and a path graph $P_{m+1}$ such that a pendant vertex of $P_{m+1}$ is a vertex of $C_n$.
	\end{definition}
	
	\begin{definition}
		Let $G$ and $H$ be graphs.
		\begin{enumerate}
			\item[i.] A join graph $G+H$ is a graph with vertex set $V(G+H)=V(G)\cup V(H)$ and edge set $E(G+H)=E(G)\cup E(H)\cup \{uv:u\in V(G)\text{ and }v\in V(H)\}$. 
			\item[ii.] Suppose that $G$ has order $n$. The corona graph $G\circ H$ is obtained by taking one copy of $G$ and $n$ copies of $H$ such that all vertices of the $i$th copy of $H$ are adjacent to $i$th vertex of $G$.
			\item[iii.] A lexicographic product graph $G\otimes H$ is a graph with vertex set $V(G\otimes H)=V(G)\times V(H)$ and edge set
			$$E(G\otimes H)=\{(v_1,u_1)(v_2,u_2):v_1v_2\in E(G),\text{ or }v_1=v_2\text{ and }u_1u_2\in E(H)\}.$$
			\item[iv.] A cartesian product graph $G\square H$ is a graph with vertex set $V(G\square H)=V(G)\times V(H)$ and edge set
			$$E(G\square H)=\{(v_1,u_1)(v_2,u_2):v_1=v_2\text{ and }u_1u_2\in E(H),\text{ or }u_1=u_2\text{ and }v_1v_2\in E(G)\}.$$
			\item[v.] A tensor product graph $G\times H$ is a graph with vertex set $V(G\times H)=V(G)\times V(H)$ and edge set
			$$E(G\times H)=\{(v_1,u_1)(v_2,u_2):v_1v_2\in E(G)\text{ and }u_1u_2\in E(H)\}.$$
			\item[vi.] A strong product graph $G\boxtimes H$ is a graph with vertex set $V(G\boxtimes H)=V(G)\times V(H)$ and edge set
			$$E(G\boxtimes H)=E(G\square H)\cup E(G\times H).$$
		\end{enumerate}
	\end{definition}

	\section{Main Results}
	Let $\Phi(\eta)=\{\xi:\gcd(\xi,\eta)=1\text{ and }0<\xi<\eta\}$. So, $|\Phi(\eta)|=\phi(\eta)$ where $\phi$ is the Euler phi function. Note that there are $\eta-\phi(\eta)$ integers in the set $\{0,1,\ldots,\eta-1\}$ that are not relatively prime to $\eta$. In addition, let $A_j=\{a\in \Phi(\eta):\zeta_\eta(a)=j\}$ for $j=0,1$. Moreover, in this section, we assume that $\zeta_\eta$ satisfies the following property.
	
	\begin{property}\label{pro1}
		Suppose that $\gcd(\theta_1,\eta)=\gcd(\theta_2,\eta)=1$. If $\theta_1\equiv \theta_2\pmod{\eta}$, then $\zeta_\eta(\theta_1)=\zeta_\eta(\theta_2)$.
	\end{property}
	
	In the following results, let $H$ be a graph of order $k\eta$, where $k,\eta\geq 1$, and suppose that $h:V(H)\to \{1,2,\ldots,k\eta\}$ is a bijective function. We partition the range of $h$, which is the set $\{1,2,\ldots,k\eta\}$, into subsets
	$$\lambda_t=\{t\eta+i:i=1,2,\ldots,\eta\}$$
	for $t=0,1,\ldots,k-1$ and define 
	\begin{equation}
		D_t=\{u\in V(H):h(u)\in \lambda_t\}. \label{partition}
	\end{equation}
	
	Let $R=\{[1],[2],\ldots,[\eta]\}$ be the set of residue classes modulo $\eta$. Obviously, $[0]=[\eta]$. Note that $[i]=\{t\eta+i:t\in \mathbb{Z}\}$ for $i=1,2,\ldots,\eta$. Clearly, for the elements of $\lambda_t$, $t\eta+i$ only belongs to $[i]$ for $i=1,2,\ldots,\eta$ and so $\lambda_t$ has one-to-one correspondence with $R$. Because $h$ is a bijective function, it follows that $D_t$ has one-to-one correspondence with $\lambda_t$. So, $D_t$ and $R$ have one-to-one correspondence too. This implies that images of the vertices in each set $D_t$ under $h$ contain exactly one representative of each residue class modulo $\eta$. We state this in the following remark.
	
	\begin{remark}\label{Dt}
		Let $R=\{[1],[2],\ldots,[\eta]\}$ be the set of residue classes modulo $\eta$. Then the sets $D_t$ and $R$ have one-to-one correspondence, for each $t=0,1,\ldots,k-1$, with respect to $H$.
	\end{remark}
	
	\begin{theorem}\label{thm1}
		Let $\eta\geq 3$ be an integer and let $G$ be a graph of order $n$. In addition, let $g:V(G)\to \{1,2,\ldots,n\}$ be a bijective function. Suppose further that
		\begin{align}	
			\Gamma_0^G=&\{uv\in E(G):\gcd(g(u)+g(v),\eta)\neq 1\}\nonumber\\
			&\cup \{uv\in E(G):\gcd(g(u)+g(v),\eta)= 1\text{ and }\zeta_\eta(g(u)+g(v))=0\},\label{eqcor1}\\
			\Gamma_1^G=&\{uv\in E(G):\gcd(g(u)+g(v),\eta)=1\text{ and }\zeta_\eta(g(u)+g(v))=1\},\label{eqcor2}\\
			\Gamma_0^H=&\{uv\in E(H):\gcd(h(u)+h(v),\eta)\neq 1\}\nonumber\\
			&\cup \{uv\in E(H):\gcd(h(u)+h(v),\eta)= 1\text{ and }\zeta_\eta(h(u)+h(v))=0\},\label{eqcor3}\\
			\Gamma_1^H=&\{uv\in E(H):\gcd(h(u)+h(v),\eta)=1\text{ and }\zeta_\eta(h(u)+h(v))=1\}.\label{eqcor4}
		\end{align} 
		Now, let 
		\begin{equation}
			\varpi(G\circ H;g,h)=nk[\eta-\phi(\eta)+|A_0|-|A_1|]+n[|\Gamma_0^H|-|\Gamma_1^H|]+|\Gamma_0^G|-|\Gamma_1^G|. \label{varpicor}
		\end{equation}
		If the functions $g$ and $h$ satisfy
		\begin{equation}
			\left|\varpi(G\circ H;g,h)\right|\leq 1,\label{eqcor5}
		\end{equation} 
		then the corona product graph $G\circ H$ is an arithmetic cordial graph modulo $\eta$ under $\langle S,\zeta_\eta,+\rangle$, 
		where $S=\{1,2,\ldots,n(k\eta+1)\}$. In addition, if $\varpi(G\circ H;g,h)$ is equal to $0$, $1$, or $-1$, then $G\circ H$ is a perfect, positive, or negative arithmetic cordial graph modulo $\eta$ under $\langle S,\zeta_\eta,+\rangle$, respectively.
	\end{theorem}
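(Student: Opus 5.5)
We construct an explicit labeling $f$ of $G\circ H$ and count edges so that $e_{f_\eta^*}(0)-e_{f_\eta^*}(1)$ equals $\varpi(G\circ H;g,h)$. Write $V(G)=\{v_1,\ldots,v_n\}$ with the indexing chosen so that $g(v_i)=i$, and let $H_i$ be the copy of $H$ attached to $v_i$, with $u^{(i)}\in V(H_i)$ the copy of $u\in V(H)$. The blocks of labels are determined by $g$ and $h$. Vertex $v_i$ receives $f(v_i)=(i-1)(k\eta+1)+1$, and $u^{(i)}$ receives $f(u^{(i)})=(i-1)(k\eta+1)+1+h(u)$. Each vertex of $G$ together with its copy of $H$ then occupies the consecutive block $\{(i-1)(k\eta+1)+1,\ldots,i(k\eta+1)\}$. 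The map $f$ is a bijection onto $S=\{1,\ldots,n(k\eta+1)\}$.

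\emph{Reducing everything to $g$ and $h$ modulo $\eta$.} This is the main obstacle, because $f(v_i)\not\equiv g(v_i)\pmod\eta$ in general. Under the naive labeling above, $f(v_i)+f(v_j)\equiv (i+j-2)+2\pmod\eta$, while $g(v_i)+g(v_j)=i+j$. So the edge labels of $G$ are not directly those of $g$. I would first try to adjust the offsets so that $f(v_i)\equiv i$ and $f(u^{(i)})\equiv c_i+h(u)$ modulo $\eta$. For instance, place $v_i$ at a position in its block with the correct residue mod $\eta$: the block contains $k\eta+1\geq \eta+1$ consecutive integers, so every residue appears in it. The remaining $k\eta$ labels are then assigned to $H_i$ so that, within $H_i$, edge sums are congruent to $h(u)+h(w)$ up to a common shift. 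The cleanest route is to choose $f(u^{(i)})$ congruent to $h(u)$ plus a constant $2c\equiv 0$. If exact congruence to $h$ cannot be achieved, the fallback is to show that the count of $1$-labels over $E(H_i)$ equals $|\Gamma_1^H|$. That count depends only on the residues, and by Property~\ref{pro1} $\zeta_\eta$ depends only on residues. With this reduction, $E(G)$ contributes $|\Gamma_0^G|-|\Gamma_1^G|$ and the $n$ copies of $E(H)$ contribute $n(|\Gamma_0^H|-|\Gamma_1^H|)$.

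\emph{Spoke edges.} For each $i$, the edges $v_iu^{(i)}$ have sums $f(v_i)+f(u^{(i)})$. As $u$ ranges over $D_t$, Remark~\ref{Dt} says the residues of $h(u)$ run over each class in $R$ exactly once. Adding the fixed number $f(v_i)$ permutes $R$, so the sums over $D_t$ also hit each residue class exactly once. Among these $\eta$ sums, $\eta-\phi(\eta)$ are not coprime to $\eta$ and get label $0$. The remaining $\phi(\eta)$ have residues exactly $\Phi(\eta)$, and by Property~\ref{pro1} $|A_0|$ of them get label $0$ and $|A_1|$ get label $1$. Summing over $t=0,\ldots,k-1$ and $i=1,\ldots,n$, the spokes contribute $nk[\eta-\phi(\eta)+|A_0|-|A_1|]$. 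This step needs no residue alignment, which is why the spokes are robust.

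\emph{Conclusion.} Adding the three contributions gives $e_{f_\eta^*}(0)-e_{f_\eta^*}(1)=\varpi(G\circ H;g,h)$. Hence \eqref{eqcor5} yields cordiality, and the values $0,1,-1$ give the perfect, positive and negative cases respectively. The delicate point is the offset bookkeeping in the second paragraph. I expect the paper intends the labels of $G$-edges and $H$-edges to retain their residues from $g$ and $h$. If a uniform shift $s$ appears, for example sums $\equiv g(u)+g(v)+s$, I would revisit the block layout. One option is to put the $H_i$ labels first and $v_i$ last in the block, so that $f(v_i)=i(k\eta+1)$. The goal is a layout in which the shifts cancel modulo $\eta$.
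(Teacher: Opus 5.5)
Your spoke-edge count and final bookkeeping are correct and match the paper. However, there is a genuine gap at exactly the step you flagged: you never produce a bijection $f$ for which the edges of each copy $H_i$ reproduce the counts $|\Gamma_0^H|,|\Gamma_1^H|$.

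You also misdiagnose where your explicit layout breaks. With $f(v_i)=(i-1)(k\eta+1)+1\equiv i\pmod\eta$, your own computation gives $f(v_i)+f(v_j)\equiv (i+j-2)+2=i+j=g(v_i)+g(v_j)$, so the $G$-edges are fine. The failure is in the copies of $H$. There $f(u^{(i)})\equiv h(u)+i$, hence $f(u^{(i)})+f(w^{(i)})\equiv h(u)+h(w)+2i\pmod\eta$. Since $\eta\geq 3$, the shift $2i$ is nonzero modulo $\eta$ (already for $i=1$). Your alternative $f(v_i)=i(k\eta+1)$, with $H_i$ placed first in the block, still leaves the shift $2(i-1)$, which is nonzero for $i=2$.

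The ``fallback'' does not rescue this. The count does depend only on residues, but here the residues themselves are shifted, and a shift changes both which sums are coprime to $\eta$ and the values of $\zeta_\eta$. Concretely, take $\eta=3$, $k=1$, $H=P_3$ labeled $1,2,3$ along the path, and $\zeta_3(1)=1$, $\zeta_3(2)=0$. The sums $3,5$ give two $0$-labels, while the shifted sums $5,7$ give one $0$-label and one $1$-label. So your formula for $e_{f_\eta^*}(0)-e_{f_\eta^*}(1)$ would be wrong for this layout.

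The missing idea is to make every offset a multiple of $\eta$, so that $f(u^{(i)})\equiv h(u)$ and $f(v_i)\equiv g(v_i)\pmod\eta$ hold exactly. The paper does this by not interleaving the vertices of $G$ with the copies of $H$. It sets $f(u_j^i)=h(u_j)+k\eta(i-1)$, so the copies of $H$ fill $\{1,\ldots,nk\eta\}$ in consecutive blocks of length $k\eta$. It then sets $f(v_i)=g(v_i)+nk\eta$. Now all three congruences (for $H$-edges, $G$-edges and spokes) hold with no shift, and your counting goes through verbatim.

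Your interleaved blocks could also be repaired. Each block $\{(i-1)(k\eta+1)+1,\ldots,i(k\eta+1)\}$ contains residue $i$ exactly $k+1$ times and every other residue exactly $k$ times. Giving $v_i$ one label $\equiv i$ therefore leaves exactly $k$ labels in each residue class, so a residue-preserving bijection with the values of $h$ exists. This has to be argued and the labeling actually defined; as written, the proposal does neither.
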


	\begin{proof}
		Let $V(G)=\{v_1,v_2,\ldots,v_n\}$ and $V(H)=\{u_1,u_2,\ldots,u_{k\eta}\}$. Suppose that $V(H^i)=\{u_1^i,u_2^i,\ldots,u_{k\eta}^i\}$ is the vertex of the $i$th copy of $H$ for which every vertex of $H^i$ is adjacent to $v_i$, for $i=1,2,\ldots,n$. Now, let $f:V(G\circ H)\to S$ be a function defined by
		\begin{align*}
			f(u_j^i)&=h(u_j)+k\eta(i-1) \text{ for }j=1,2,\ldots,k\eta,\\
			f(v_i)&=g(v_i)+nk\eta,
		\end{align*}
		for $i=1,2,\ldots,n$, where $S=\{1,2,\ldots,n(k\eta+1)\}$. Hence, $f$ is a bijective function.

		For the edges of $H^i$, for any $u_a^iu_b^i\in E(H^i)$, we have 
		$$f(u_a^i)+f(u_b^i)\equiv h(u_a)+h(u_b)\pmod{\eta},$$
		for some $a,b\in\{1,2,\ldots,k\eta\}$. Thus, by (\ref{eqcor3}) and (\ref{eqcor4}), $|\Gamma_q^H|$ edges of $H^i$ are labeled $q$, for $q=0,1$, under $f_\eta^*$, for $i=1,2,\ldots,n$.
		
		For the edges of $G$, for any $v_cv_d\in E(G)$, we have 
		$$f(v_c)+f(v_d)\equiv g(v_c)+g(v_d)\pmod{\eta},$$
		for some $c,d\in \{1,2,\ldots,n\}$. Applying (\ref{eqcor1}) and (\ref{eqcor2}), there are $|\Gamma_q^G|$ edges of $G$ with label $q$, for $q=0,1$, under $f_\eta^*$.
		
		For the edges of the form $v_iu_j^i$, we have
		\begin{equation}
			f(v_i)+f(u_j^i)\equiv g(v_i)+h(u_j)\pmod{\eta},	\label{eqcor6}
		\end{equation}
		for $i=1,2,\ldots,n$ and $j=1,2,\ldots,k\eta$. Define 
		$$\alpha_t^i=\bigcup_{u_a\in D_t}\{\xi:f(v_i)+f(u_a^i)\equiv \xi\pmod{\eta},\text{ }0\leq \xi<\eta\}$$
		for $t=0,1,\ldots,k-1$ and $i=1,2,\ldots,n$. Consequently, applying (\ref{eqcor6}),
		$$\alpha_t^i=\bigcup_{u_a\in D_t}\{\xi:g(v_i)+h(u_a)\equiv \xi\pmod{\eta},\text{ }0\leq \xi<\eta\}$$
		for $t=0,1,\ldots,k-1$ and $i=1,2,\ldots,n$.
		By (\ref{partition}) together with Remark~\ref{Dt}, we have 
		\begin{equation}
			\alpha_t^i=\{0,1,\ldots,\eta-1\},\label{alphacor}
		\end{equation}
		for $t=0,1,\ldots,k-1$ and $i=1,2,\ldots,n$. Note that Eq. (\ref{alphacor}) holds because, as $u_{a}$ ranges over $D_{t}$, the expression $g(v_i)+h(u_a)$ yields a permutation of the set $\{0,1,\ldots,\eta-1\}$ modulo $\eta$. Hence, there are $nk[\eta-\phi(\eta)]+nk|A_0|$ edges of the form $v_iu_j^i$ with label $0$ and $nk|A_1|$ edges with label $1$, under $f_\eta^*$.
		
		Therefore,
		\begin{align*}
			e_{f_\eta^*}(0)&=n|\Gamma_0^H|+|\Gamma_0^G|+nk[\eta-\phi(\eta)]+nk|A_0|,\\
			e_{f_\eta^*}(1)&=n|\Gamma_1^H|+|\Gamma_1^G|+nk|A_1|.
		\end{align*}
		Clearly, by Eq. (\ref{varpicor}),  $e_{f_\eta^*}(0)-e_{f_\eta^*}(1)=\varpi(G\circ H;g,h)$. Since the functions $g$ and $h$ satify (\ref{eqcor5}), it is immediate that $|e_{f_\eta^*}(0)-e_{f_\eta^*}(1)|\leq 1$. Hence, the conclusion follows.
	\end{proof}
	
	\begin{theorem}\label{thm2}
		Let $\eta\geq 3$ be an integer. Suppose that $G$ is a graph of order $n$ and size $m$. Also, let $S=\{1,2,\ldots,nk\eta\}$. In addition, let
		\begin{align}	
			\Gamma_0=&\{uv\in E(H):\gcd(h(u)+h(v),\eta)\neq 1\}\nonumber\\
			&\cup \{uv\in E(H):\gcd(h(u)+h(v),\eta)= 1\text{ and }\zeta_\eta(h(u)+h(v))=0\},\label{eqpro1}\\
			\Gamma_1=&\{uv\in E(H):\gcd(h(u)+h(v),\eta)=1\text{ and }\zeta_\eta(h(u)+h(v))=1\}.\label{eqpro2}
		\end{align}
		\begin{enumerate}
			\item[i.] Let
			\begin{equation}
				\varpi(G\otimes H;h)=mk^2\eta [\eta-\phi(\eta)+|A_0|-|A_1|]+n[|\Gamma_0|-|\Gamma_1|].\label{varpilex}
			\end{equation}
			If the function $h$ satisfies
			\begin{equation}
				\left|\varpi(G\otimes H;h)\right|\leq 1,\label{eqprolex}
			\end{equation}
			then the lexicographic product graph $G\otimes H$ is an arithmetic cordial graph modulo $\eta$ under $\langle S,\zeta_\eta,+\rangle$. Moreover, if $\varpi(G\otimes H;h)$ is equal to $0$, $1$, or $-1$, then $G\otimes H$ is a perfect, positive, or negative arithmetic cordial graph modulo $\eta$ under $\langle S,\zeta_\eta,+\rangle$, respectively.
			
			\item[ii.] Let $\eta$ be an odd integer and let
			\begin{equation}
				\varpi(G\square H;h)=mk [\eta-\phi(\eta)+|A_0|-|A_1|]+n[|\Gamma_0|-|\Gamma_1|].\label{varpicar}
			\end{equation} 
			If the function $h$ satisfies
			\begin{equation}
				\left|\varpi(G\square H;h)\right|\leq 1,\label{eqprocar}
			\end{equation}
			then the cartesian product graph $G\square H$ is an arithmetic cordial graph modulo $\eta$ under $\langle S,\zeta_\eta,+\rangle$. Moreover, if $\varpi(G\square H;h)$ is equal to $0$, $1$, or $-1$, then $G\square H$ is a perfect, positive, or negative arithmetic cordial graph modulo $\eta$ under $\langle S,\zeta_\eta,+\rangle$, respectively.
			
			\item[iii.] If $h$ is a perfect arithmetic cordial labeling modulo $\eta$ under $\langle S_1,\zeta_\eta,+\rangle$ of $H$, where $S_1=\{1,2,\ldots,k\eta\}$, then the tensor product graph $G\times H$ is a perfect arithmetic cordial graph modulo $\eta$ under $\langle S,\zeta_\eta,+\rangle$.
			
			\item[iv.] Let $\eta$ be an odd integer and let
			\begin{equation}
				\varpi(G\boxtimes H;h)=mk [\eta-\phi(\eta)+|A_0|-|A_1|]+(n+2m)[|\Gamma_0|-|\Gamma_1|].\label{varpistrong}
			\end{equation} 
			If the function $h$ satisfies
			\begin{equation}
				\left|\varpi(G\boxtimes H;h)\right|\leq 1,\label{eqprostrong}
			\end{equation}
			then the strong product graph $G\boxtimes H$ is an arithmetic cordial graph modulo $\eta$ under $\langle S,\zeta_\eta,+\rangle$. Moreover, if $\varpi(G\boxtimes H;h)$ is equal to $0$, $1$, or $-1$, then $G\boxtimes H$ is a perfect, positive, or negative arithmetic cordial graph modulo $\eta$ under $\langle S,\zeta_\eta,+\rangle$, respectively.
		\end{enumerate}
	\end{theorem}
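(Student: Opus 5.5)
The plan is to reuse the labeling scheme from the proof of Theorem~\ref{thm1}: stack $n$ shifted copies of $h$. Write $V(G)=\{v_1,\ldots,v_n\}$ and $V(H)=\{u_1,\ldots,u_{k\eta}\}$, and define $f:V(G)\times V(H)\to S$ by
$$f(v_i,u_j)=h(u_j)+k\eta(i-1).$$
This is a bijection onto $S=\{1,\ldots,nk\eta\}$, and $f(v_i,u_j)\equiv h(u_j)\pmod{\eta}$. By Property~\ref{pro1}, the label of an edge under $f_\eta^*$ depends only on the residue of $f(a)+f(b)$ modulo $\eta$. So every count below reduces to counting residues of sums of $h$-values. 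The edges of each product split into three types, handled in the order below.

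\emph{Fibre edges} are those $(v_i,u_x)(v_i,u_y)$ with $u_xu_y\in E(H)$; they occur in parts i, ii and iv. Here $f(v_i,u_x)+f(v_i,u_y)\equiv h(u_x)+h(u_y)$, so each of the $n$ copies of $H$ contributes exactly $|\Gamma_q|$ edges with label $q$. This gives the term $n[|\Gamma_0|-|\Gamma_1|]$.

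\emph{Lexicographic cross edges} are those $(v_a,u_x)(v_b,u_y)$ with $v_av_b\in E(G)$ and $x,y$ arbitrary. For fixed $x$ and a fixed block $D_t$, the sums $h(u_x)+h(u_y)$ with $u_y\in D_t$ run through every residue modulo $\eta$ exactly once, by Remark~\ref{Dt}, just as in the derivation of (\ref{alphacor}). Over $k\eta$ choices of $x$ and $k$ blocks, each residue therefore occurs $k^2\eta$ times per edge of $G$. This yields $mk^2\eta[\eta-\phi(\eta)+|A_0|]$ edges labeled $0$ and $mk^2\eta|A_1|$ edges labeled $1$, which proves i.

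\emph{Cartesian cross edges} are those $(v_a,u_x)(v_b,u_x)$ with $v_av_b\in E(G)$, whose sums are $\equiv 2h(u_x)$. This is the one step needing the extra hypothesis: since $\eta$ is odd, $2$ is invertible modulo $\eta$. Hence as $u_x$ ranges over a block $D_t$, the values $2h(u_x)$ permute the residues, and each residue occurs $k$ times per edge of $G$. This gives the term $mk[\eta-\phi(\eta)+|A_0|-|A_1|]$ and proves ii.

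\emph{Tensor edges} are, for each $v_av_b\in E(G)$ and $u_xu_y\in E(H)$, the two edges $(v_a,u_x)(v_b,u_y)$ and $(v_a,u_y)(v_b,u_x)$. Both have sum $\equiv h(u_x)+h(u_y)$, so the tensor edges contribute $2m|\Gamma_q|$ edges with label $q$. If $h$ is perfect, then $|\Gamma_0|=|\Gamma_1|$, so $e_{f_\eta^*}(0)=e_{f_\eta^*}(1)$, which proves iii.

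For iv, the edge set of $G\boxtimes H$ is the disjoint union of the fibre edges, the cartesian cross edges and the tensor edges. Adding the three counts gives
$$e_{f_\eta^*}(0)-e_{f_\eta^*}(1)=mk[\eta-\phi(\eta)+|A_0|-|A_1|]+(n+2m)[|\Gamma_0|-|\Gamma_1|]=\varpi(G\boxtimes H;h).$$
In parts i and ii, the corresponding sums likewise equal $\varpi(G\otimes H;h)$ and $\varpi(G\square H;h)$. The hypotheses (\ref{eqprolex}), (\ref{eqprocar}) and (\ref{eqprostrong}) then give the cordiality conclusion, and the sign of $\varpi$ determines whether the labeling is perfect, positive or negative.

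The main point of care is the tensor count: each pair consisting of an edge of $G$ and an edge of $H$ produces two product edges, which is the source of the factor $2m$. One must also confirm that the three edge types in the strong product are disjoint. They are: fibre edges have the same $G$-coordinate, cartesian cross edges have the same $H$-coordinate, and tensor edges differ in both coordinates.
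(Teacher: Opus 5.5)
Your proposal is correct and follows essentially the same route as the paper. It uses the same stacked labeling $f((v_i,u_j))=h(u_j)+k\eta(i-1)$, reduces each edge label to a residue of a sum of $h$-values via Property~\ref{pro1}, and obtains the same counts: $n|\Gamma_q|$ for fibre edges, $k^2\eta$ per residue per edge of $G$ for lexicographic cross edges (Remark~\ref{Dt}), $k$ per residue for cartesian cross edges since $2$ is invertible for odd $\eta$, and $2m|\Gamma_q|$ for tensor edges, which are then added over the disjoint edge classes of $G\boxtimes H$.
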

	
	\begin{proof}
		Let $V(G)=\{v_1,v_2,\ldots,v_n\}$ and $V(H)=\{u_1,u_2,\ldots,u_{k\eta}\}$. Let $f:V(G\divideontimes H)\to S$ be a function defined by
		\begin{equation}
			f((v_i,u_j))=h(u_j)+k\eta (i-1)\label{biequationpro}
		\end{equation}
		for $j=1,2,\ldots,k\eta$ and $i=1,2,\ldots,n$, where $S=\{1,2,\ldots,nk\eta\}$ and $\divideontimes\in \{\otimes,\square,\times,\boxtimes\}$. Hence, $f$ is a bijective function.
		
		For (i), (ii), and (iv): for the edges of the form $(v_i,u_a)(v_i,u_b)$ where $u_au_b\in E(H)$ and $i=1,2,\ldots,n$, we have
		$$f((v_i,u_a))+f((v_i,u_b))\equiv h(u_a)+h(u_b)\pmod{\eta}.$$
		Clearly, by (\ref{eqpro1}) and (\ref{eqpro2}), $n|\Gamma_q|$ edges of the form $(v_i,u_a)(v_i,u_b)$ have label $q$ for $q=0,1$, under $f_\eta^*$.
		
		For the edges of the form $(v_c,u_a)(v_d,u_b)$, where $v_cv_d\in E(G)$ and the variables $a$ and $b$ depend on the product graphs defined in (i), (ii), (iii), and (iv), we have
		\begin{equation}
			f((v_c,u_a))+f((v_d,u_b))\equiv h(u_a)+h(u_b)\pmod{\eta}. \label{eqpro3}
		\end{equation}
		
		For (i), $a,b=1,2,\ldots,k\eta$. Define a set 
		$$\alpha_{a,t}^{v_cv_d}=\bigcup_{u_b\in D_t}\{\xi:f((v_c,u_a))+f((v_d,u_b))\equiv\xi\pmod{\eta},\text{ }0\leq\xi<\eta\}$$
		for each $v_cv_d\in E(G)$, $a=1,2,\ldots,k\eta$, and $t=0,1,\ldots,k-1$. Therefore, using Eq. (\ref{eqpro3}), we obtain
		$$\alpha_{a,t}^{v_cv_d}=\bigcup_{u_b\in D_t}\{\xi:h(u_a)+h(u_b)\equiv\xi\pmod{\eta},\text{ }0\leq\xi<\eta\}$$
		for each $v_cv_d\in E(G)$, $a=1,2,\ldots,k\eta$, and $t=0,1,\ldots,k-1$.
		By (\ref{partition}) together with Remark~\ref{Dt}, we have
		\begin{equation}
			\alpha_{a,t}^{v_cv_d}=\{0,1,\ldots,\eta-1\}\label{alphacar}
		\end{equation}
		for each $v_cv_d\in E(G)$, $a=1,2,\ldots,k\eta$, and $t=0,1,\ldots,k-1$. Observe that Eq. (\ref{alphacar}) holds since, as $u_{b}$ ranges over $D_{t}$, the expression $h(u_a)+h(u_b)$ results in a permutation of the set $\{0,1,\ldots,\eta-1\}$ modulo $\eta$. Hence,  $mk^2\eta[\eta-\phi(\eta)]+mk^2\eta|A_0|$ edges of the form $(v_c,u_a)(v_d,u_b)$ have label $0$ and  $mk^2\eta|A_1|$ edges have label $1$, under $f_\eta^*$.
		
		Therefore,
		\begin{align*}
			e_{f_\eta^*}(0)&=n|\Gamma_0|+mk^2\eta[\eta-\phi(\eta)]+mk^2\eta|A_0|,\\
			e_{f_\eta^*}(1)&=n|\Gamma_1|+mk^2\eta|A_1|.
		\end{align*}
		By Eq. (\ref{varpilex}), $e_{f_\eta^*}(0)-e_{f_\eta^*}(1)=\varpi(G\otimes H;h)$. Since $h$ satisfies (\ref{eqprolex}),  $|e_{f_\eta^*}(0)-e_{f_\eta^*}(1)|\leq 1$. So, (i) is true.
		
		For (ii), we have $a=b=1,2,\ldots,k\eta$. Thus, congruence (\ref{eqpro3}) becomes 
		\begin{equation}
			f((v_c,u_a))+f((v_d,u_b))\equiv 2h(u_a)\pmod{\eta}\label{eqpro4}
		\end{equation}
		for each $v_cv_d\in E(G)$ and $a=1,2,\ldots,k\eta$.
		Let
		$$\beta_{t}^{v_cv_d}=\bigcup_{u_a\in D_t}\{\xi:f((v_c,u_a))+f((v_d,u_b))\equiv\xi\pmod{\eta},\text{ }0\leq \xi<\eta\}$$
		for each $v_cv_d\in E(G)$ and $t=0,1,\ldots,k-1$. Using Eq. (\ref{eqpro4}), we have 
		$$\beta_{t}^{v_cv_d}=\bigcup_{u_a\in D_t}\{\xi:2h(u_a)\equiv\xi\pmod{\eta},\text{ }0\leq \xi<\eta\}$$
		for each $v_cv_d\in E(G)$ and $t=0,1,\ldots,k-1$.
		Note that $\eta$ is odd. By using (\ref{partition}) and Remark~\ref{Dt}, we obtain
		\begin{equation}
			\beta_{t}^{v_cv_d}=\{0,1,\ldots,\eta-1\}\label{beta}
		\end{equation}
		for each $v_cv_d\in E(G)$ and $t=0,1,\ldots,k-1$. Eq. (\ref{beta}) is true because, as $u_a$ ranges over $D_t$, the expression $2h(u_a)$ forms a permutation of the set $\{0,1,\ldots,\eta-1\}$ modulo $\eta$. This implies that $mk[\eta-\phi(\eta)]+mk|A_0|$ edges of the form $(v_c,u_a)(v_d,u_b)$ have label $0$ and  $mk|A_1|$ edges have label $1$, under $f_\eta^*$.
		
		Consequently,
		\begin{align}
			e_{f_\eta^*}(0)&=n|\Gamma_0|+mk[\eta-\phi(\eta)]+mk|A_0|,\label{carlabel0}\\
			e_{f_\eta^*}(1)&=n|\Gamma_1|+mk|A_1|.\label{carlabel1}
		\end{align}
		By Eq. (\ref{varpicar}), we have $e_{f_\eta^*}(0)-e_{f_\eta^*}(1)=\varpi(G\square H;h)$. Because $h$ satisfies (\ref{eqprocar}), $|e_{f_\eta^*}(0)-e_{f_\eta^*}(1)|\leq 1$. Hence, (ii) is true.
		
		For (iii), we have $u_au_b\in E(H)$. So, $(v_c,u_b)(v_d,u_a)\in E(G\times H)$ and as a result
		$$f((v_c,u_b))+f((v_d,u_a))\equiv h(u_a)+h(u_b)\pmod{\eta}.$$
		Combining this with congruence (\ref{eqpro3}) and applying (\ref{eqpro1}) and (\ref{eqpro2}), we have
		\begin{equation}
			e_{f_\eta^*}(q)=2m|\Gamma_q|\label{eqproten}
		\end{equation}
		for $q=0,1$. 
		
		Now, suppose that $h$ is a perfect arithmetic cordial labeling modulo $\eta$ under $\langle S_1,\zeta_\eta,+\rangle$ of $H$, where $S_1=\{1,2,\ldots,k\eta\}$. Clearly, by (\ref{eqpro1}) and (\ref{eqpro2}), 
		$$e_{h_\eta^*}(q)=|\Gamma_q|,$$
		for $q=0,1$, and so $e_{h_\eta^*}(0)-e_{h_\eta^*}(1)=|\Gamma_0|-|\Gamma_1|=0$.
		
		Hence, by (\ref{eqproten}), it follows that $e_{f_\eta^*}(0)-e_{f_\eta^*}(1)=0$. Therefore, (iii) is true.
		
		For (iv), because $E(G\boxtimes H)=E(G\square H)\cup E(G\times H)$ and $E(G\square H)\cap E(G\times H)=\varnothing$, we can combine the arguments in (ii) (Eq. (\ref{carlabel0}) and (\ref{carlabel1})) and (iii) (Eq. (\ref{eqproten})), and so
		\begin{align*}
			e_{f_\eta^*}(0)&=n|\Gamma_0|+mk[\eta-\phi(\eta)]+mk|A_0|+2m|\Gamma_0|,\\
			e_{f_\eta^*}(1)&=n|\Gamma_1|+mk|A_1|+2m|\Gamma_1|.
		\end{align*}
		By Eq. (\ref{varpistrong}), we have $e_{f_\eta^*}(0)-e_{f_\eta^*}(1)=\varpi(G\boxtimes H;h)$. Applying the inequality in (\ref{eqprostrong}), which is satisfied by $h$, we have $|e_{f_\eta^*}(0)-e_{f_\eta^*}(1)|\leq 1$. Therefore, (iv) is true.
	\end{proof}
	
	The result for the tensor product graph stated in Theorem~\ref{thm2} (iii) can easily be extended from $+$ to an arbitrary commutative binary function $\star$ with the property that if $a\equiv b\pmod{\eta}$ and $c\equiv d\pmod{\eta}$, then $a\star c\equiv b\star d\pmod{\eta}$, for a fixed positive integer $\eta$. Hence, we have the following corollary.

	\begin{corollary}\label{corten}
		Let $\eta\geq 3$ be an integer and suppose that $G$ is a graph of order $n$, $n\geq 2$. Let $\star$ be a commutative binary function with the property that if $a\equiv b\pmod{\eta}$ and $c\equiv d\pmod{\eta}$, then $a\star c\equiv b\star d\pmod{\eta}$. If $H$ is a perfect arithmetic cordial graph modulo $\eta$ under $\langle S_1,\zeta_\eta,\star\rangle$ with order $k\eta$, where $S_1=\{1,2,\ldots,k\eta\}$ and $k\geq 1$, then $G\times H$ is a perfect arithmetic cordial graph modulo $\eta$ under $\langle S,\zeta_\eta,\star\rangle$, where $S=\{1,2,\ldots,nk\eta\}$.
	\end{corollary}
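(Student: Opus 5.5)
We reuse the construction from the proof of Theorem~\ref{thm2}. Let $h:V(H)\to S_1$ be a perfect arithmetic cordial labeling of $H$ modulo $\eta$ under $\langle S_1,\zeta_\eta,\star\rangle$. Define $f:V(G\times H)\to S$ by
$$f((v_i,u_j))=h(u_j)+k\eta(i-1)$$
for $i=1,\ldots,n$ and $j=1,\ldots,k\eta$, exactly as in (\ref{biequationpro}). As before, $f$ is a bijection onto $S=\{1,2,\ldots,nk\eta\}$. The key observation is $f((v_i,u_j))\equiv h(u_j)\pmod{\eta}$.

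By the congruence hypothesis on $\star$, for every edge $(v_c,u_a)(v_d,u_b)$ of $G\times H$, with $v_cv_d\in E(G)$ and $u_au_b\in E(H)$, we get
$$f((v_c,u_a))\star f((v_d,u_b))\equiv h(u_a)\star h(u_b)\pmod{\eta}.$$
The value of $f_\eta^*$ depends only on two things: $\gcd(\cdot,\eta)$, which depends only on the residue modulo $\eta$, and $\zeta_\eta$, which depends only on the residue by Property~\ref{pro1}. So the label of this edge equals $h_\eta^*(u_au_b)$.

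One subtlety is that $h_\eta^*(u_au_b)$ must be well defined regardless of orientation. Since $\star$ is only a binary function, $h(u_a)\star h(u_b)$ and $h(u_b)\star h(u_a)$ could in principle differ. Commutativity of $\star$ removes this issue.

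Next we count edges. Each pair of edges $v_cv_d\in E(G)$ and $u_au_b\in E(H)$ produces exactly two edges of $G\times H$: $(v_c,u_a)(v_d,u_b)$ and $(v_c,u_b)(v_d,u_a)$. These two edges are distinct because $v_c\neq v_d$, and each carries label $h_\eta^*(u_au_b)$ by the previous paragraph. Conversely, every edge of $G\times H$ arises exactly once in this way. Writing $m=|E(G)|$, this gives
$$e_{f_\eta^*}(q)=2m\,e_{h_\eta^*}(q)\quad\text{for } q=0,1,$$
which mirrors (\ref{eqproten}). Since $h$ is perfect, $e_{h_\eta^*}(0)=e_{h_\eta^*}(1)$, hence $e_{f_\eta^*}(0)-e_{f_\eta^*}(1)=0$ and $f$ is a perfect arithmetic cordial labeling.

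The main obstacle is the bookkeeping in this edge count: each $G$-edge and $H$-edge pair must correspond to exactly two product edges, with no double counting. That is where commutativity matters, since it lets each unordered $H$-edge receive a single label. The hypothesis $n\geq 2$ is not needed for the argument. If $G$ has no edges, both counts are $0$, which is trivially perfect; at most $n\geq 2$ is needed for the statement to be meaningful.
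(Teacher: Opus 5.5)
Your proof is correct and takes the same route as the paper. You use the same labeling $f((v_i,u_j))=h(u_j)+k\eta(i-1)$, transport $f\equiv h \pmod{\eta}$ through $\star$ using Property~\ref{pro1}, and reach the count $e_{f_\eta^*}(q)=2m\,e_{h_\eta^*}(q)$ of Theorem~\ref{thm2}(iii), and you spell out details, such as why commutativity is needed, that the paper dismisses as \emph{analogous}. One trivial slip: once $v_c\neq v_d$, the edges $(v_c,u_a)(v_d,u_b)$ and $(v_c,u_b)(v_d,u_a)$ could coincide only if $u_a=u_b$, so their distinctness actually comes from $u_a\neq u_b$ (both inequalities hold automatically in simple graphs).
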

	
	\begin{proof}
		Let $h$ be a perfect arithmetic cordial labeling modulo $\eta$ under $\langle S_1,\zeta_\eta,\star\rangle$ of $H$, where $S_1=\{1,2,\ldots,k\eta\}$. Consider the assumptions given to $G$ and $H$ in the proof of Theorem~\ref{thm2} and define the bijective function $f:V(G\times H)\to S$ similarly to Eq. (\ref{biequationpro}), where $S=\{1,2,\ldots,nk\eta\}$. Consequently, the proof follows analogously to that of Theorem~\ref{thm2} (iii) by replacing the binary operation $+$ with the binary function $\star$, except for Eq. (\ref{biequationpro}).
	\end{proof}

	\begin{property}\label{pro2}
		Suppose that $\zeta_\eta$ satifies the property $|A_0|=|A_1|$. Hence, $|A_0|=|A_1|=\frac{\phi(\eta)}{2}$ and $|A_0|-|A_1|=0$.
	\end{property}

	In the following results, assume that $\zeta_\eta$ satisfy Property~\ref{pro2}. 
	
	\begin{corollary}\label{cor2}
		Suppose that $p$ is an odd prime with $p\geq 3$. Let $H$ be a negative arithmetic cordial graph modulo $p$ under $\langle S_1,\zeta_p, +\rangle$ of order $p$, where $S_1=\{1,2,\ldots,p\}$. If $G$ is a perfect (positive) (negative) arithmetic cordial graph modulo $p$ under $\langle S_2,\zeta_p, +\rangle$ of order $n$, where $S_2=\{1,2,\ldots,n\}$, then $G\circ H$ is a perfect (positive) (negative) arithmetic cordial graph modulo $p$ under $\langle S_3,\zeta_p,+\rangle$, where $S_3=\{1,2,\ldots,n(p+1)\}$. 
	\end{corollary}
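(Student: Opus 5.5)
We apply Theorem~\ref{thm1} with $\eta=p$ and $k=1$, taking $g$ to be the given perfect (positive) (negative) arithmetic cordial labeling of $G$ under $\langle S_2,\zeta_p,+\rangle$. We take $h$ to be the given negative arithmetic cordial labeling of $H$ under $\langle S_1,\zeta_p,+\rangle$. Since $H$ has order $p=k\eta$ with $k=1$, the standing hypotheses on $H$ and $h:V(H)\to\{1,\ldots,p\}$ are met. Theorem~\ref{thm1} then produces a labeling of $G\circ H$ onto $S_3=\{1,2,\ldots,n(p+1)\}$. So it suffices to compute $\varpi(G\circ H;g,h)$ from (\ref{varpicor}) and show that it equals $0$, $1$, or $-1$ according as $g$ is perfect, positive, or negative.

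We evaluate the three terms of (\ref{varpicor}) separately.

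First, since $p$ is prime, $\phi(p)=p-1$, so $\eta-\phi(\eta)=1$. By Property~\ref{pro2}, $|A_0|-|A_1|=0$. Hence the first term is $nk[1+0]=n$.

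Second, the sets $\Gamma_0^H$ and $\Gamma_1^H$ in (\ref{eqcor3}) and (\ref{eqcor4}) are exactly the edges of $H$ receiving labels $0$ and $1$ under $h_p^*$. Because $h$ is negative, $|\Gamma_0^H|-|\Gamma_1^H|=-1$, so the second term is $-n$.

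Third, in the same way, $|\Gamma_0^G|-|\Gamma_1^G|=e_{g_p^*}(0)-e_{g_p^*}(1)$, which is $0$, $1$, or $-1$ according to the type of $g$.

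Adding the three terms gives $\varpi(G\circ H;g,h)=n-n+(e_{g_p^*}(0)-e_{g_p^*}(1))$. This lies in $\{0,1,-1\}$ and matches the type of $g$. The final clause of Theorem~\ref{thm1} therefore gives the conclusion.

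The only point needing care is the identification of $\Gamma_q^G$ and $\Gamma_q^H$ with the label classes of $g_p^*$ and $h_p^*$. This holds because the induced labels in the definition of arithmetic cordial labeling under $+$ are defined by exactly the conditions in (\ref{eqcor1})–(\ref{eqcor4}).
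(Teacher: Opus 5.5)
Your proposal is correct and follows the paper's proof essentially verbatim. You apply Theorem~\ref{thm1} with $k=1$ and $\eta=p$; then $p-\phi(p)=1$ and $|A_0|-|A_1|=0$ make the first term of $\varpi(G\circ H;g,h)$ equal to $n$, which cancels against $n\bigl(|\Gamma_0^H|-|\Gamma_1^H|\bigr)=-n$ and leaves $|\Gamma_0^G|-|\Gamma_1^G|$ to determine the type. You are in fact slightly more explicit than the paper in invoking Property~\ref{pro2} and in justifying that the sets $\Gamma_q$ are exactly the label classes of $g_p^*$ and $h_p^*$.
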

	
	\begin{proof}
		Suppose that $h:V(H)\to S_1$ is a negative arithmetic cordial graph modulo $p$ under $\langle S_1,\zeta_p, +\rangle$, where $S_1=\{1,2,\ldots,p\}$. In addition, let $g:V(G)\to S_2$ be an arithmetic cordial labeling modulo $p$ under $\langle S_2,\zeta_p, +\rangle$ of $G$, where $S_2=\{1,2,\ldots,n\}$. Using the sets $\Gamma_q^G$ and $\Gamma_q^H$, for $q=0,1$, defined in Theorem~\ref{thm1}, we have $e_{g_p^*}(q)=|\Gamma_q^G|$ and $e_{h_p^*}(q)=|\Gamma_q^H|$. Consequently,
		\begin{align}
			|\Gamma_0^G|-|\Gamma_1^G|&=e_{g_p^*}(0)-e_{g_p^*}(1)=\begin{cases}
				-1&\text{ if $g$ is negative arithmetic cordial labeling}\\
				0&\text{ if $g$ is perfect arithmetic cordial labeling}\\
				1&\text{ if $g$ is positive arithmetic cordial labeling},
			\end{cases}\label{proofGcircH1}\\
			|\Gamma_0^H|-|\Gamma_1^H|&=e_{h_p^*}(0)-e_{h_p^*}(1)=-1.\label{proofGcircH2}
		\end{align}
		Because $p$ is an odd prime, we have $p-\phi(p)=p-(p-1)=1$. 
		
		Using Eq. (\ref{varpicor}) with $k=1$ and $\eta=p$, Eq. (\ref{proofGcircH2}) implies that 
		\begin{align*}
			\varpi(G\circ H;g,h)=n[p-\phi(p)]+n[|\Gamma_0^H|-|\Gamma_1^H|]+|\Gamma_0^G|-|\Gamma_1^G|=n-n+|\Gamma_0^G|-|\Gamma_1^G|=|\Gamma_0^G|-|\Gamma_1^G|.
		\end{align*} 
		By Eq. (\ref{proofGcircH1}),
		$$\varpi(G\circ H;g,h)=\begin{cases}
			-1&\text{ if $g$ is negative arithmetic cordial labeling}\\
			0&\text{ if $g$ is perfect arithmetic cordial labeling}\\
			1&\text{ if $g$ is positive arithmetic cordial labeling}.
		\end{cases}$$
		Thus, by Theorem~\ref{thm1}, the conclusion is true. 
	\end{proof}

	\begin{corollary}\label{cor3}
		Let $\eta\geq 3$ and let $G$ be a graph of order $n$ and size $m$. Suppose that $H$ is a negative arithmetic cordial graph modulo $\eta$ under $\langle S_1,\zeta_\eta,+\rangle$ of order $k\eta$, where $S_1=\{1,2,\ldots,k\eta\}$. Let $S=\{1,2,\ldots,nk\eta\}$. 
		
		\begin{enumerate}
			\item[i.] For $n,m,k\geq 1$, if
			\begin{equation}
				\frac{n}{m}=k^2\eta[\eta-\phi(\eta)], \label{corpro1}
			\end{equation}
			then the lexicographic product graph $G\otimes H$ is a perfect arithmetic cordial graph modulo $\eta$ under $\langle S,\zeta_\eta,+\rangle$.
			
			\item[ii.] Let $\eta$ be an odd integer. For $n,m,k\geq 1$, if
			\begin{equation}
				\frac{n}{m}=k[\eta-\phi(\eta)],\label{corpro2}
			\end{equation}
			then the cartesian product graph $G\square H$ is a perfect arithmetic cordial graph modulo $\eta$ under $\langle S,\zeta_\eta,+\rangle$.
			
			\item[iii.] Let $\eta$ be an odd integer. For $n,m\geq 1$ and $k\geq 3$, if
			\begin{equation}
				\frac{n}{m}=k[\eta-\phi(\eta)]-2,\label{corpro3}
			\end{equation}
			then the strong product graph $G\boxtimes H$ is a perfect arithmetic cordial graph modulo $\eta$ under $\langle S,\zeta_\eta,+\rangle$.
		\end{enumerate}

	\end{corollary}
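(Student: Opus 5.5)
The plan is to apply Theorem~\ref{thm2} directly, with $h$ a negative arithmetic cordial labeling of $H$, and to show that each relevant $\varpi$ equals $0$. Take $h:V(H)\to S_1$ to be a negative arithmetic cordial labeling modulo $\eta$ under $\langle S_1,\zeta_\eta,+\rangle$. The sets $\Gamma_0,\Gamma_1$ of Theorem~\ref{thm2} are defined through the same rule as $h_\eta^*$. Therefore $|\Gamma_q|=e_{h_\eta^*}(q)$ for $q=0,1$, which gives $|\Gamma_0|-|\Gamma_1|=-1$. Since $\zeta_\eta$ satisfies Property~\ref{pro2}, we also have $|A_0|-|A_1|=0$. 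So in each of (\ref{varpilex}), (\ref{varpicar}) and (\ref{varpistrong}) the bracket $[\eta-\phi(\eta)+|A_0|-|A_1|]$ reduces to $\eta-\phi(\eta)$, and the $H$-term becomes $-n$ (or $-(n+2m)$ in the strong case).

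For (i), formula (\ref{varpilex}) gives $\varpi(G\otimes H;h)=mk^2\eta[\eta-\phi(\eta)]-n$. Hypothesis (\ref{corpro1}) says $n=mk^2\eta[\eta-\phi(\eta)]$, so $\varpi=0$, and Theorem~\ref{thm2}(i) yields a perfect arithmetic cordial labeling.

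For (ii), $\eta$ is odd, so Theorem~\ref{thm2}(ii) applies. Formula (\ref{varpicar}) gives $\varpi(G\square H;h)=mk[\eta-\phi(\eta)]-n$, which vanishes by (\ref{corpro2}).

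For (iii), Theorem~\ref{thm2}(iv) applies and gives
$$\varpi(G\boxtimes H;h)=mk[\eta-\phi(\eta)]-(n+2m)=m\bigl(k[\eta-\phi(\eta)]-2\bigr)-n.$$
This is $0$ by (\ref{corpro3}). In each case the conclusion is the ``perfect'' clause of the corresponding part of Theorem~\ref{thm2}.

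The computation is routine. The only point needing care is the identification $|\Gamma_q|=e_{h_\eta^*}(q)$, which follows because $\Gamma_0$ and $\Gamma_1$ partition $E(H)$ by exactly the criterion defining $h_\eta^*$.

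The role of $k\geq 3$ in (iii) is the one thing I would double-check. It should only be there to make $k[\eta-\phi(\eta)]-2$ positive, so that (\ref{corpro3}) is consistent with $n,m\geq 1$. For odd $\eta\geq3$ we have $\eta-\phi(\eta)\geq 1$, because $0$ is a residue not coprime to $\eta$; hence $k\geq3$ gives $k[\eta-\phi(\eta)]-2\geq 1$. No other ingredient is needed.
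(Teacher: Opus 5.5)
Your proof is correct and matches the paper's argument: take $h$ to be a negative arithmetic cordial labeling, so that $|\Gamma_0|-|\Gamma_1|=-1$. Then substitute the hypothesis on $n/m$ into the corresponding $\varpi$ from Theorem~\ref{thm2}, get $\varpi=0$, and invoke parts (i), (ii), (iv). Your explicit use of Property~\ref{pro2} to drop $|A_0|-|A_1|$ is something the paper uses only implicitly, and your remark on $k\geq 3$ is a harmless consistency check.
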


	\begin{proof} 
		Let $h:V(H)\to S_1$ be a negative arithmetic cordial labeling modulo $\eta$ under $\langle S_1,\zeta_\eta,+\rangle$ of $H$, where $S_1=\{1,2,\ldots,k\eta\}$. We will use the sets $\Gamma_0$ and $\Gamma_1$ defined in Theorem~\ref{thm2}. It is clear that $e_{h_\eta^*}(q)=|\Gamma_q|$ for $q=0,1$. Hence, 
		\begin{equation}
			e_{h_\eta^*}(0)-e_{h_\eta^*}(1)=|\Gamma_0|-|\Gamma_1|=-1.\label{corpro4}
		\end{equation}
		
		For (i), suppose that Eq. (\ref{corpro1}) holds, and so $n=mk^2\eta[\eta-\phi(\eta)]$. Substitute this to Eq. (\ref{varpilex}) together with Eq. (\ref{corpro4}), we obtain
		$$\varpi(G\otimes H;h)=mk^2\eta[\eta-\phi(\eta)]+n[|\Gamma_0|-|\Gamma_1|]=mk^2\eta[\eta-\phi(\eta)]-mk^2\eta[\eta-\phi(\eta)]=0.$$
		By Theorem~\ref{thm2} (i), the conclusion holds.
		
		For (ii), assume that Eq. (\ref{corpro2}) holds. So, $n=mk[\eta-\phi(\eta)]$. Along with Eq. (\ref{corpro4}), Eq. (\ref{varpicar}) becomes
		$$\varpi(G\square H;h)=mk [\eta-\phi(\eta)]+mk[\eta-\phi(\eta)][|\Gamma_0|-|\Gamma_1|]=mk [\eta-\phi(\eta)]-mk [\eta-\phi(\eta)]=0.$$
		Applying Theorem~\ref{thm2} (ii), the conclusion follows.
		
		Lastly, for (iii), suppose that Eq. (\ref{corpro3}) holds. It follows that $n=mk[\eta-\phi(\eta)]-2m$. Using Eq. (\ref{varpistrong}) together with  Eq. (\ref{corpro4}), we have
		$$\varpi(G\boxtimes H;h)=mk [\eta-\phi(\eta)]+(mk[\eta-\phi(\eta)]-2m+2m)[|\Gamma_0|-|\Gamma_1|]=mk [\eta-\phi(\eta)]-mk[\eta-\phi(\eta)]=0.$$
		Using Theorem~\ref{thm2} (iv), the conclusion holds.
	\end{proof}

	\section{Applications to Some Graphs}
	
	Suppose that $p$ is an odd prime and let $\zeta_p$ satisfies Properties~\ref{pro1} and \ref{pro2}. It should be noted that $p-\phi(p)=p-(p-1)=1$. 
	
	We extend some results in \cite{Andoyo1} to the context of arithmetic cordial labeling using the same construction from their respective proof. 
	
	\begin{lemma}[General version of Lemma 3.1 in \cite{Andoyo1}]\label{lem1}
		There exists an integer $k$, $0\leq k<p$, for which $\zeta_p\left(\frac{p+2k\pm1}{2}\right)=1$.
	\end{lemma}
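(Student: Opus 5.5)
The plan is to reduce the statement to a counting fact about residues modulo $p$, using Property~\ref{pro1} and Property~\ref{pro2}. I read the symbol $\pm$ as saying that for each choice of sign there is such a $k$. The two signs may require different values of $k$: asking for one $k$ that works for both signs simultaneously would need two consecutive residues in $A_1$. That can fail, for instance when $p=3$ and $A_1$ is a single residue.

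\textbf{Step 1: the arguments of $\zeta_p$ form a complete residue system.} Since $p$ is odd, $\frac{p+2k+1}{2}=\frac{p+1}{2}+k$ and $\frac{p+2k-1}{2}=\frac{p-1}{2}+k$ are integers. As $k$ runs over $0,1,\ldots,p-1$, each expression runs over $p$ consecutive integers. Hence each takes every residue class modulo $p$ exactly once.

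\textbf{Step 2: pick a target residue in $A_1$.} By Property~\ref{pro2}, $|A_1|=\frac{\phi(p)}{2}=\frac{p-1}{2}\geq 1$, because $p\geq 3$. So there is some $a\in\Phi(p)$ with $\zeta_p(a)=1$. For the sign $+$, Step~1 gives a unique $k\in\{0,\ldots,p-1\}$ with $\frac{p+1}{2}+k\equiv a\pmod p$. For the sign $-$, it likewise gives a unique $k'$ with $\frac{p-1}{2}+k'\equiv a\pmod p$.

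\textbf{Step 3: transfer the value of $\zeta_p$.} The integer $\xi=\frac{p+1}{2}+k$ is congruent to $a$ and $a\not\equiv 0\pmod p$, so $\gcd(\xi,p)=1$ and $\xi$ lies in the domain of $\zeta_p$. Property~\ref{pro1} then gives $\zeta_p(\xi)=\zeta_p(a)=1$. The same argument applies to $\frac{p-1}{2}+k'$. This proves the lemma.

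The only real subtlety is the domain issue in Step~3: $\zeta_p$ is defined only on integers coprime to $p$, so I must choose the target residue inside $\Phi(p)$. Property~\ref{pro2} guarantees that $A_1$ is nonempty, which makes this possible. Everything else is the elementary fact that $p$ consecutive integers form a complete residue system modulo $p$.
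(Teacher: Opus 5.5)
Your proof is correct and follows essentially the same route as the paper. The paper also notes that $\frac{p+2k+1}{2}$ runs through a complete residue system modulo $p$ as $k$ ranges over $0,\ldots,p-1$. It then uses Property~\ref{pro1} to transfer $\zeta_p$-values, excluding $k=\frac{p-1}{2}$ where the argument is $\equiv 0$ (exactly your domain caveat), and invokes Property~\ref{pro2} to hit $A_1$. Your choice of a separate $k$ for each sign matches the paper's ``analogous'' treatment of the $-$ case, and your remark that a single common $k$ can fail, e.g.\ for $p=3$, is accurate.
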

	
	\begin{proof}
		For each $k=0,1,\ldots,p-1$, define $\frac{p+2k+1}{2}\equiv \theta_k\pmod{p}$, $0\leq \theta_k<p$. Suppose that $$\Lambda = \{\theta_k:k=0,1,\ldots,p-1\}.$$
		We have
		$$\Lambda=\left\{\frac{p+1}{2},\frac{p+3}{2},\ldots,p-1,0,1,\ldots,\frac{p-1}{2}\right\}.$$
		Hence, $\Lambda=\{0\}\cup \{1,2,\ldots,p-1\}$. Note that by Property~\ref{pro1}, $\zeta_p\left(\frac{p+2k+1}{2}\right)=\zeta_p(\theta_k)$ for each $k=0,1,\ldots,p-1$, $k\neq \frac{p-1}{2}$. Hence, Property~\ref{pro2} guarantees that for some $k$, $\theta_k\in A_1$, that is, $\zeta_p(\theta_k)=1$, $0\leq k<p$. Hence, the conclusion follows.
		
		The proof for $\frac{p+2k-1}{2}$ is analogous.
	\end{proof}
	
	\begin{remark} \label{rem1}
		Let $p\geq 3$. The following statements are true:
		\begin{enumerate}
			\item[i.] The path graph $P_p$ is a perfect arithmetic cordial graph modulo $p$ under $\langle S,\zeta_p,+\rangle$, where $S=\{1,2,\ldots,p\}$.
			
			\item[ii.] The cycle graph $C_p$ is a positive arithmetic cordial graph modulo $p$ under $\langle S,\zeta_p,+\rangle$, where $S=\{1,2,\ldots,p\}$.

			\item[iii.] The star graph $\text{Star}_p$ is a perfect arithmetic cordial graph modulo $p$ under $\langle S,\zeta_p,+\rangle$, where $S=\{1,2,\ldots,p\}$.
			
			\item[iv.] The tadpole graph $T_{p,p}$ is a perfect arithmetic cordial graph modulo $p$ under $\langle S,\zeta_p,+\rangle$, where $S=\{1,2,\ldots,2p\}$.

			\item[v.] The kayak paddle graph $KP_{p,p,p}$ is a perfect arithmetic cordial graph modulo $p$ under $\langle S,\zeta_p,+\rangle$, where $S=\{1,2,\ldots,3p\}$.

			\item[vi.] The bistar graph $B_{p,p}$ is a positive arithmetic cordial graph modulo $p$ under $\langle S,\zeta_p,+\rangle$, where $S=\{1,2,\ldots,2p\}$.

			\item[vii.] The fan graph graph $F_{p+1}$ is a positive arithmetic cordial graph modulo $p$ under $\langle S,\zeta_p,+\rangle$, where $S=\{1,2,\ldots,p+1\}$.
		\end{enumerate}
	\end{remark}
	
	In star graph, we use the notation $\text{Star}_p$ instead of $S_p$ to avoid any confusion with the sets of the form $S_j$ that is defined for arithmetic structures. For the definition of star, kayak paddle, bistar, and fan graphs, refer to \cite{Andoyo1}. Note that Lemma~\ref{lem1} is used for the proof of Remark~\ref{rem1} (iv) and (v).
	
	Moreover, the following result is obtained from \cite{Andoyo5}.
	
	\begin{remark}\label{rem2} 
		Let $p\geq 3$. The following statements are true:
		\begin{enumerate}
			\item[i.] The ladder graph $L_p$ is a positive arithmetic cordial graph modulo $p$ under $\langle S,\zeta_p,+\rangle$, where $S=\{1,2,\ldots,2p\}$.
			\item[ii.] Let $n\geq 2$. Then the alternate cycle snake graph $A_n(C_p)$ is a positive arithmetic cordial graph modulo $p$ under $\langle S,\zeta_p,+\rangle$, where $S=\{1,2,\ldots,np\}$.
			\item[iii.] Suppose that $\zeta_p(1)=1$ and $\zeta_p(2)=0$. Let $J$ be a connected graph of order $k$, $k\geq 2$. If $J$ has size $k+1$, then $J\circ P_{p-1}$ is a negative arithmetic cordial graph modulo $p$ under $\langle S,\zeta_p,+\rangle$, where $S=\{1,2,\ldots,kp\}$. 
		\end{enumerate}
	\end{remark}

	For the definition of ladder and alternate cycle snake graphs, refer to \cite{Andoyo5}.
	
	\begin{theorem}\label{T_p-1,1}
		Let $p\geq 5$ with $\zeta_p(1)=1$. The tadpole graph $T_{p-1,1}$ is a negative arithmetic cordial graph modulo $p$ under $\langle S,\zeta_p,+\rangle$, where $S=\{1,2,\ldots,p\}$.
	\end{theorem}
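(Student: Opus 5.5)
The plan is to write down an explicit labeling and count edge labels by residues modulo $p$. Since $p$ is an odd prime, Properties~\ref{pro1} and \ref{pro2} give that an edge $uv$ gets label $1$ exactly when $f(u)+f(v)\not\equiv 0\pmod p$ and the residue of $f(u)+f(v)$ lies in $A_1$. Moreover $|A_1|=|A_0|=\frac{p-1}{2}$. The tadpole $T_{p-1,1}$ has $p$ vertices and $p$ edges. So being negative means exactly $e_{f_p^*}(1)=\frac{p+1}{2}$ and $e_{f_p^*}(0)=\frac{p-1}{2}$, and this is the count I will aim for.

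\textbf{Construction.} Set $x=\frac{p+1}{2}$, so that $2x\equiv 1\pmod p$. The pendant vertex $w$ gets label $x$. The cycle $C_{p-1}$, with vertices $c_1,c_2,\ldots,c_{p-1}$ in cyclic order, gets the remaining labels in consecutive order: $f(c_j)$ is the representative in $\{1,\ldots,p\}$ of $x+j\pmod p$. This is clearly a bijection onto $S=\{1,\ldots,p\}$. The pendant edge joins $w$ to the cycle vertex $c_j$ with label $\equiv x+y$, where $y\equiv\frac{3-p}{2}$; then the pendant edge sum is $2x+y\equiv 2$. Equivalently, we attach $w$ to the vertex whose label satisfies $x+f(c_j)\equiv 2$. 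This vertex exists and is not $w$ itself, because $2x\equiv1\not\equiv 2$. We need $p\ge5$ so that $C_{p-1}$ is a genuine cycle of length at least $4$.

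\textbf{Counting.} All sums depend only on residues by Property~\ref{pro1}, so the wrap-around of labels past $p$ is harmless.
\begin{itemize}
\item The path edges $c_jc_{j+1}$, for $j=1,\ldots,p-2$, have sums $2x+2j+1\equiv 2j+2$. As $j$ runs over $1,\ldots,p-2$, the index $2j+2$ runs over all residues except those with $j\equiv 0$ and $j\equiv p-1$, namely $2$ and $0$; this uses the invertibility of $2$ mod $p$. So these $p-2$ edges realize each residue in $\{1,3,4,\ldots,p-1\}$ exactly once, and they contribute $|A_1|-[2\in A_1]$ labels $1$.
\item The closing edge $c_{p-1}c_1$ has sum $(x+p-1)+(x+1)\equiv 2x\equiv 1$. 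Since $\zeta_p(1)=1$, this edge has label $1$.
\item The pendant edge has sum $\equiv 2$ by construction, so it contributes $[2\in A_1]$.
\end{itemize}

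The total number of $1$-labels is therefore $\frac{p-1}{2}-[2\in A_1]+1+[2\in A_1]=\frac{p+1}{2}$, which gives $e_{f_p^*}(0)-e_{f_p^*}(1)=-1$. The main design obstacle was making the count independent of the unknown values of $\zeta_p$ on residues other than $1$. The choice $2x\equiv 1$ makes the closing edge use the known residue $1$. The choice of attachment point makes the pendant edge restore exactly the residue $2$ that the consecutive path edges omit. The only remaining check is the bookkeeping that the omitted residues are precisely $0$ and $2$, and that each other residue occurs exactly once.
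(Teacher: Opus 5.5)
Your proposal is correct and is essentially the paper's proof: your cycle $c_1,\dots,c_{p-1}$ carries the labels $\frac{p+3}{2},\dots,p,1,\dots,\frac{p-1}{2}$, which is the paper's cycle $1,\dots,\frac{p-1}{2},\frac{p+3}{2},\dots,p$ read from a different starting vertex, with the pendant label $\frac{p+1}{2}$ attached to the vertex labelled $\frac{p+3}{2}$ in both, and the count is the same (the non-closing cycle edges and the pendant edge hit each residue $1,\dots,p-1$ once, and the extra edge has residue $1$, which $\zeta_p(1)=1$ labels $1$). One small slip: with $y\equiv\frac{3-p}{2}$, the attachment vertex has label $\equiv y$ (it is $c_1$) and the pendant sum is $x+y\equiv 2$, not $2x+y$ with label $x+y$; your ``equivalently'' sentence already states the correct rule, so nothing downstream is affected.
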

	
	\begin{proof}
		Suppose that $C_{p-1}$ and $P_2$ are cycle and path of $T_{p-1,1}$, respectively, with $V(C_{p-1})=\{v_1,v_2,\ldots,v_{p-1}\}$ and $V(P_2)=\{v_{(p+1)/2},u_1\}$. Define a function $f:V(T_{p-1,1})\to S$ by
		\begin{align*}
			f(v_i)&=\begin{cases}
				i&\text{ for }i=1,2,\ldots,\frac{p-1}{2}\\
				i+1&\text{ for }i=\frac{p+1}{2},\frac{p+3}{2},\ldots,p-1,
			\end{cases}\\
			f(u_1)&=\frac{p+1}{2},
		\end{align*}
		where $S=\{1,2,\ldots,p\}$. Hence, $f$ is a bijective function.
		
		For the edges,
		\begin{align*}
			f(v_i)+f(v_{i+1})&\equiv \begin{cases}
				2i+1\pmod{p}&\text{ for }i=1,2,\ldots,\frac{p-3}{2}\\
				1\pmod{p}&\text{ for }i=\frac{p-1}{2}\\
				2i+3-p\pmod{p}&\text{ for }i=\frac{p+1}{2},\frac{p+3}{2}\ldots,p-2,
			\end{cases}\\
			f(v_1)+f(v_{p-1})&\equiv 1\pmod{p},\\
			f(v_{(p+1)/2})+f(u_1)&\equiv 2\pmod{p}.
		\end{align*}
		
		For the edges in $\{v_{(p+1)/2}u_1\}\cup [E(C_{p-1})-\{v_1v_{p-1}\}]$, define 
		\begin{align*}
			\alpha&=\bigcup_{i=1}^{p-2}\{\xi:f(v_i)+f(v_{i+1})\equiv \xi\pmod{p},\text{ }0<\xi<p\},\\
			\beta&=\{\xi:f(v_{(p+1)/2})+f(u_1)\equiv \xi\pmod{p},\text{ }0<\xi<p\}.
		\end{align*}
		Thus,
		\begin{align*}
			\alpha&=\left[\bigcup_{i=1}^{(p-3)/2}\{2i+1\}\right]\cup\{1\}\cup \left[\bigcup_{i=(p+1)/2}^{p-2}\{2i+3-p\}\right]\\
			&=\{1,3,\ldots,p-2\}\cup\{4,6,\ldots,p-1\}\\
			&=\{1\}\cup\{3,4,\ldots,p-1\}
		\end{align*}
		and $\beta=\{2\}$. Hence, $\alpha\cup \beta=\{1,2,\ldots,p-1\}$. For the edge $v_1v_{p-1},$ since $\zeta_p(1)=1$, we have $f_p^*(v_1v_{p-1})=1$. Using Property~\ref{pro2} together with the fact that $f_p^*(v_1v_{p-1})=1$, 
		$$e_{f_p^*}(0)=\frac{p-1}{2}\text{ and }e_{f_p^*}(1)=\frac{p-1}{2}+1,$$
		and so $e_{f_p^*}(0)-e_{f_p^*}(1)=-1$. Therefore, $T_{p-1,1}$ is a negative arithmetic cordial graph modulo $p$ under $\langle S,\zeta_p,+\rangle$.
	\end{proof}
	
	\begin{proposition}\label{prop1}
		Let $p\geq 5$ and let $\zeta_p(1)=1$. Suppose that $G$ is a graph and let $S=\{1,2,\ldots,(p+1)|V(G)|\}$. The following statements are true:
		\begin{enumerate}
			\item[i.] If $G\in \{P_p,\text{Star}_p,T_{p,p},KP_{p,p,p}\}$, then the corona graph $G\circ T_{p-1,1}$ is a perfect arithmetic cordial graph modulo $p$ under $\langle S,\zeta_p,+\rangle$.
			
			\item[ii.] Let $n\geq 2$ for the graph $A_n(C_p)$. If $G\in \{C_p,B_{p,p},F_{p+1},L_p,A_n(C_p)\}$, then the corona graph $G\circ T_{p-1,1}$ is a positive arithmetic cordial graph modulo $p$ under $\langle S,\zeta_p,+\rangle$.
		\end{enumerate}
	\end{proposition}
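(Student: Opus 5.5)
The plan is to apply Corollary~\ref{cor2} with $H=T_{p-1,1}$. By Theorem~\ref{T_p-1,1}, since $p\geq 5$ and $\zeta_p(1)=1$, the tadpole graph $T_{p-1,1}$ is a negative arithmetic cordial graph modulo $p$ under $\langle S_1,\zeta_p,+\rangle$. Here $S_1=\{1,2,\ldots,p\}$, and $T_{p-1,1}$ has order $(p-1)+1=p$. This is exactly the hypothesis on $H$ in Corollary~\ref{cor2}, with $k=1$.

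Next, for each candidate $G$ of order $n$, I will take the arithmetic cordial labeling $g:V(G)\to S_2=\{1,2,\ldots,n\}$ supplied by Remark~\ref{rem1} or Remark~\ref{rem2}. For part (i), Remark~\ref{rem1} (i), (iii), (iv), (v) gives that $P_p$, $\text{Star}_p$, $T_{p,p}$, and $KP_{p,p,p}$ are perfect arithmetic cordial graphs modulo $p$ under $\langle S_2,\zeta_p,+\rangle$. Corollary~\ref{cor2} then yields that $G\circ T_{p-1,1}$ is perfect under $\langle S_3,\zeta_p,+\rangle$ with $S_3=\{1,2,\ldots,n(p+1)\}$, which is the set $S$ in the statement. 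For part (ii), Remark~\ref{rem1} (ii), (vi), (vii) and Remark~\ref{rem2} (i), (ii) give that $C_p$, $B_{p,p}$, $F_{p+1}$, $L_p$, and $A_n(C_p)$ (for $n\geq 2$) are positive. Corollary~\ref{cor2} then gives that $G\circ T_{p-1,1}$ is positive.

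The only point needing care is bookkeeping. I must check that the label set of each $G$ in the cited remarks is exactly $\{1,2,\ldots,|V(G)|\}$, which it is: for example, $|V(T_{p,p})|=2p$, $|V(KP_{p,p,p})|=3p$, $|V(F_{p+1})|=p+1$, and $|V(A_n(C_p))|=np$. This matches the form of $S_2$ required in Corollary~\ref{cor2}. I also need that $\zeta_p$ satisfies Properties~\ref{pro1} and~\ref{pro2}, which are the standing assumptions of this section. No computation beyond $p-\phi(p)=1$ is needed, and that step is already absorbed into Corollary~\ref{cor2}.
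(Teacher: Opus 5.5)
Your proposal is correct and matches the paper's proof: Theorem~\ref{T_p-1,1} gives $H=T_{p-1,1}$ as a negative arithmetic cordial graph of order $p$, Remarks~\ref{rem1} and~\ref{rem2} give the perfect or positive labelings of each $G$ on $\{1,2,\ldots,|V(G)|\}$, and Corollary~\ref{cor2} transfers that property to $G\circ T_{p-1,1}$ on $\{1,2,\ldots,(p+1)|V(G)|\}$. The only thing to tidy is that you use $n$ both for $|V(G)|$ and for the number of cycles in $A_n(C_p)$; the paper has the same overlap, and it causes no mathematical problem.
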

	
	\begin{proof}
		Suppose that $S=\{1,2,\ldots,(p+1)|V(G)|\}$. Note that $T_{p-1,1}$ is a negative arithmetic cordial graph modulo $p$ under $\langle S_1,\zeta_p,+\rangle$ of order $p$, where $S_1=\{1,2,\ldots,p\}$, according to Theorem~\ref{T_p-1,1}.
		
		For (i), notice that Remark~\ref{rem1} (i), (iii), (iv), and (v) show that $P_p$, $\text{Star}_p$, $T_{p,p}$, and $KP_{p,p,p}$, respectively, are perfect arithmetic cordial graphs modulo $p$ under their respective arithmetic structures. Hence, for any $G\in \{P_p,\text{Star}_p,T_{p,p},KP_{p,p,p}\}$, $G$ is a perfect arithmetic cordial graph modulo $p$ under $\langle S_2,\zeta_p,+\rangle$, where $S_2=\{1,2,\ldots,|V(G)|\}$. Using Corollary~\ref{cor2} with $G\in \{P_p,\text{Star}_p,T_{p,p},KP_{p,p,p}\}$ and $H=T_{p-1,1}$, we conclude that $G\circ T_{p-1,1}$ is a perfect arithmetic cordial graph modulo $p$ under $\langle S,\zeta_p,+\rangle$.
		
		For (ii), by Remark~\ref{rem1} (ii), (vi), and (vii), the graphs $C_p$, $B_{p,p}$, and $F_{p+1}$, respectively, are positive arithmetic cordial graphs modulo $p$ under their respective arithmetic structures. Also, by Remark~\ref{rem2} (i) and (ii), the graphs $L_p$ and $A_n(C_p)$, respectively, are positive arithmetic cordial graphs modulo $p$ under their respective arithmetic structures. This implies that for any $G\in \{C_p,B_{p,p},F_{p+1},L_p,A_n(C_p)\}$, $G$ is a positive arithmetic cordial graph modulo $p$ under $\langle S_3,\zeta_p,+\rangle$, where $S_3=\{1,2,\ldots,|V(G)|\}$. Applying Corollary~\ref{cor2} with $G\in \{C_p,B_{p,p},F_{p+1},L_p,A_n(C_p)\}$ and $H=T_{p-1,1}$, it follows that $G\circ T_{p-1,1}$ is a positive arithmetic cordial graph modulo $p$ under $\langle S,\zeta_p,+\rangle$.
	\end{proof}
	
	\begin{proposition}\label{prop2}
		Let $p\geq 5$ and let $\zeta_p(1)=1$. Then the corona graph $T_{p-1,1}\circ T_{p-1,1}$ is a negative arithmetic cordial graph modulo $p$ under $\langle S,\zeta_p,+\rangle$, where $S=\{1,2,\ldots,p(p+1)\}$.
	\end{proposition}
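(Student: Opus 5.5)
We obtain Proposition~\ref{prop2} directly from Corollary~\ref{cor2} with $G=H=T_{p-1,1}$. By Theorem~\ref{T_p-1,1}, since $p\geq 5$ and $\zeta_p(1)=1$, the tadpole graph $T_{p-1,1}$ is a negative arithmetic cordial graph modulo $p$ under $\langle S_1,\zeta_p,+\rangle$ with $S_1=\{1,2,\ldots,p\}$. In particular its order is $p$, so it can serve as the graph $H$ of order $k\eta$ with $k=1$ and $\eta=p$.

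Next we take $G=T_{p-1,1}$, so $n=|V(G)|=p$. The same negative labeling $g$ from Theorem~\ref{T_p-1,1} shows that $G$ is a negative arithmetic cordial graph modulo $p$ under $\langle S_2,\zeta_p,+\rangle$ with $S_2=\{1,2,\ldots,p\}$. Corollary~\ref{cor2} then applies in its ``negative'' case. Concretely, substituting $k=1$, $\eta=p$, $p-\phi(p)=1$ and $|A_0|-|A_1|=0$ (Property~\ref{pro2}) into Eq.~(\ref{varpicor}) gives
$$\varpi(G\circ H;g,h)=n\cdot 1+n(-1)+(-1)=-1.$$
Theorem~\ref{thm1} then yields that $T_{p-1,1}\circ T_{p-1,1}$ is a negative arithmetic cordial graph modulo $p$ under $\langle S_3,\zeta_p,+\rangle$, where $S_3=\{1,2,\ldots,n(p+1)\}=\{1,2,\ldots,p(p+1)\}$. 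This is exactly the set $S$ in the statement.

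There is essentially no obstacle; the only point to check is that the hypotheses match. Corollary~\ref{cor2} requires $p$ to be an odd prime with $\zeta_p$ satisfying Properties~\ref{pro1} and \ref{pro2}, which is the standing assumption of this section. Theorem~\ref{T_p-1,1} requires $p\geq 5$ and $\zeta_p(1)=1$, which are the hypotheses of Proposition~\ref{prop2}. Both labelings $g$ and $h$ map onto $\{1,\ldots,p\}$, as Theorem~\ref{thm1} requires.
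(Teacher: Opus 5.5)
Your proposal is correct and is essentially the paper's proof. The paper likewise uses Theorem~\ref{T_p-1,1} to see that $T_{p-1,1}$ is a negative arithmetic cordial graph of order $p$, then applies Corollary~\ref{cor2} with $G=H=T_{p-1,1}$. Your explicit evaluation $\varpi(G\circ H;g,h)=p\cdot 1+p\cdot(-1)+(-1)=-1$ simply unpacks the computation already carried out inside the proof of that corollary.
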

	
	\begin{proof}
		According to Theorem~\ref{T_p-1,1}, $T_{p-1,1}$ is a negative arithmetic cordial graph modulo $p$ under $\langle S_1,\zeta_p,+\rangle$, where $S_1=\{1,2,\ldots,p\}$. Because $T_{p-1,1}$ has order $p$, using Corollary~\ref{cor2} with $G=H=T_{p-1,1}$, it is clear that $T_{p-1,1}\circ T_{p-1,1}$ is a negative arithmetic cordial graph modulo $p$ under $\langle S,\zeta_p,+\rangle$, where $S=\{1,2,\ldots,p(p+1)\}$.
	\end{proof}
	
	\begin{proposition}\label{prop3}
		Let $p\geq 5$ and suppose that $\zeta_p(1)=1$ and $\zeta_p(2)=0$. Let $J$ be a connected graph of order $k$ and size $k+1$, $k\geq 2$. Then the corona graph $(J\circ P_{p-1})\circ T_{p-1,1}$ is a negative arithmetic cordial graph modulo $p$ under $\langle S,\zeta_p,+\rangle$, where $S=\{1,2,\ldots,kp(p+1)\}$.
	\end{proposition}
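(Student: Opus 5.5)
The plan is to apply Corollary~\ref{cor2} with $G=J\circ P_{p-1}$ and $H=T_{p-1,1}$, after checking its two hypotheses. For the outer factor $H$: Theorem~\ref{T_p-1,1} holds since $p\geq 5$ and $\zeta_p(1)=1$. It says $T_{p-1,1}$ is a negative arithmetic cordial graph modulo $p$ under $\langle S_1,\zeta_p,+\rangle$ with $S_1=\{1,2,\ldots,p\}$, and its order is $(p-1)+1=p$, exactly as Corollary~\ref{cor2} requires (the case $k=1$, $\eta=p$).

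For the inner factor $G$: Remark~\ref{rem2}~(iii) applies, since $\zeta_p(1)=1$, $\zeta_p(2)=0$, and $J$ is connected of order $k\geq 2$ and size $k+1$. It gives that $G=J\circ P_{p-1}$ is a negative arithmetic cordial graph modulo $p$ under $\langle S_2,\zeta_p,+\rangle$ with $S_2=\{1,2,\ldots,kp\}$. The order of $G$ is $k+k(p-1)=kp$, so $S_2=\{1,2,\ldots,|V(G)|\}$, matching the form demanded in Corollary~\ref{cor2} with $n=kp$.

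Then Corollary~\ref{cor2}, in its ``negative'' case, yields that $(J\circ P_{p-1})\circ T_{p-1,1}$ is a negative arithmetic cordial graph modulo $p$ under $\langle S_3,\zeta_p,+\rangle$ with $S_3=\{1,2,\ldots,n(p+1)\}=\{1,2,\ldots,kp(p+1)\}$, which is the claimed $S$. To unwind the mechanism, Theorem~\ref{thm1} gives $\varpi=n[p-\phi(p)]+n(-1)+(-1)=n-n-1=-1$, using $p-\phi(p)=1$ and $|A_0|=|A_1|$ from Property~\ref{pro2}.

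No step is a real obstacle, since this is a direct composition of earlier results. The only point needing care is bookkeeping. The order of $J\circ P_{p-1}$ must be exactly $kp$ so that its labeling range is $\{1,\ldots,n\}$. The standing assumptions (Properties~\ref{pro1} and~\ref{pro2}, $p$ an odd prime) must hold simultaneously with $\zeta_p(1)=1$ and $\zeta_p(2)=0$. Finally, $p\geq 5$ must cover both Theorem~\ref{T_p-1,1} and Remark~\ref{rem2}, which needs only $p\geq 3$.
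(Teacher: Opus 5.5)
Your proposal is correct and is essentially the paper's own proof. Both apply Remark~\ref{rem2}~(iii) to get $J\circ P_{p-1}$ as a negative arithmetic cordial graph of order $kp$, and Theorem~\ref{T_p-1,1} to get $T_{p-1,1}$ as a negative arithmetic cordial graph of order $p$, then conclude with Corollary~\ref{cor2} in its negative case. Your order bookkeeping and the explicit check $\varpi=n-n-1=-1$ with $n=kp$ only make explicit what the paper leaves implicit.
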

	
	\begin{proof}
		Remark~\ref{rem2} (iii) states that $J\circ P_{p-1}$ is a negative arithmetic cordial graph modulo $p$ under $\langle S_1,\zeta_p,+\rangle$, where $S_1=\{1,2,\ldots,kp\}$. In addition, Theorem~\ref{T_p-1,1} states that $T_{p-1,1}$ is a negative arithmetic cordial graph modulo $p$ under $\langle S_2,\zeta_p,+\rangle$, where $S_2=\{1,2,\ldots,p\}$. Because $T_{p-1,1}$ has order $p$, using Corollary~\ref{cor2} with $G=J\circ P_{p-1}$ and $H=T_{p-1,1}$, it is evident that $(J\circ P_{p-1})\circ T_{p-1,1}$ is a negative arithmetic cordial graph modulo $p$ under $\langle S,\zeta_p,+\rangle$, where $S=\{1,2,\ldots,kp(p+1)\}$.
	\end{proof}

	\begin{proposition}\label{prop4}
		Suppose that $p\geq 3$ and let $G$ be a graph of order $n$, $n\geq 2$. If $H\in \{P_p,\text{Star}_p,T_{p,p},KP_{p,p,p}\}$, then the tensor product graph $G\times H$ is a perfect arithmetic cordial graph modulo $p$ under $\langle S_1,\zeta_p,+\rangle$, where $S_1=\{1,2,\ldots,n|V(H)|\}$.
	\end{proposition}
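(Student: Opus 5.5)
The plan is to apply Theorem~\ref{thm2} (iii) directly, with $\eta=p$, treating $H$ as the graph of order $k\eta$ in that theorem. Under Property~\ref{pro1}, which $\zeta_p$ is assumed to satisfy, that result says: if $H$ has order $kp$ and admits a perfect arithmetic cordial labeling $h$ modulo $p$ under $\langle\{1,\ldots,kp\},\zeta_p,+\rangle$, then $G\times H$ is a perfect arithmetic cordial graph modulo $p$ under $\langle\{1,\ldots,nkp\},\zeta_p,+\rangle$. Alternatively one may cite Corollary~\ref{corten} with $\star=+$. Addition is commutative and compatible with congruence modulo $p$, and the hypotheses $p\geq 3$ and $n\geq 2$ match those of the corollary.

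For each $H$ in the list I need two facts.

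\begin{itemize}
\item[1.] The order of $H$ is a multiple of $p$. We have $|V(P_p)|=|V(\text{Star}_p)|=p$, so $k=1$. For the tadpole, $|V(T_{p,p})|=2p$, so $k=2$. For the kayak paddle, $|V(KP_{p,p,p})|=3p$, so $k=3$. These counts follow the definitions in \cite{Andoyo1} as recorded in Remark~\ref{rem1}, whose label sets are $\{1,\ldots,p\}$, $\{1,\ldots,2p\}$ and $\{1,\ldots,3p\}$ respectively.
\item[2.] $H$ is perfect under the label set $\{1,\ldots,kp\}$. This is exactly the content of Remark~\ref{rem1} (i), (iii), (iv) and (v).
\end{itemize}

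With both facts in hand, taking $h$ to be the labeling supplied by Remark~\ref{rem1}, Theorem~\ref{thm2} (iii) gives a perfect labeling of $G\times H$ with values in $\{1,\ldots,nkp\}$. Since $nkp=n|V(H)|$, this is precisely $S_1$ in the statement.

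There is no real obstacle beyond bookkeeping. The only care needed is checking that the vertex counts of $T_{p,p}$ and $KP_{p,p,p}$ are $2p$ and $3p$, so that the label ranges in Remark~\ref{rem1} match the form $\{1,\ldots,k\eta\}$ required by the theorem.

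It may help to recall why the perfect property transfers. With $f((v_i,u_j))=h(u_j)+kp(i-1)$, every edge of $G\times H$ takes the label of its underlying $H$-edge. Each $H$-edge $u_au_b$ gives rise to exactly $2m$ edges of the product, namely $(v_c,u_a)(v_d,u_b)$ and $(v_c,u_b)(v_d,u_a)$ for each edge $v_cv_d$ of $G$. Hence $e(0)=2m|\Gamma_0|=2m|\Gamma_1|=e(1)$. This argument holds even when $G$ has no edges, since then both counts are $0$.
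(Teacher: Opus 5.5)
Your proposal is correct and follows essentially the same route as the paper. The paper also takes the perfect labelings of $P_p$, $\text{Star}_p$, $T_{p,p}$ and $KP_{p,p,p}$ from Remark~\ref{rem1} (i), (iii), (iv), (v), notes that their orders are $kp$ with $k\in\{1,2,3\}$, and applies Corollary~\ref{corten} with $\star=+$; this is equivalent to Theorem~\ref{thm2} (iii), which you cite first.
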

	
	\begin{proof}
		According to Remark~\ref{rem1} (i), (iii), (iv), and (v), the graphs $P_p$, $\text{Star}_p$, $T_{p,p}$, and $KP_{p,p,p}$, respectively, are perfect arithmetic cordial graphs modulo $p$ under their respective arithmetic structures. Hence, for each $H\in \{P_p,\text{Star}_p,T_{p,p},KP_{p,p,p}\}$, $H$ is a perfect arithmetic cordial graph modulo $p$ under $\langle S_4,\zeta_p,+\rangle$, where $S_4=\{1,2,\ldots,|V(H)|\}$. Observe that $|V(P_p)|=|V(\text{Star}_p)|=p$, $|V(T_{p,p})|=2p$, and $|V(KP_{p,p,p})|=3p$. Thus, the order of $H$ is $kp$ for $k\in\{1,2,3\}$. Using Corollary~\ref{corten} with $\star$ defined to be the binary operation of addition, $H\in \{P_p,\text{Star}_p,T_{p,p},KP_{p,p,p}\}$, $k\in \{1,2,3\}$, and $\eta=p$, we can conclude that $G\times H$ is a perfect arithmetic cordial graph modulo $p$ under $\langle S_1,\zeta_p,+\rangle$, where $S_1=\{1,2,\ldots,n|V(H)|\}$.
	\end{proof}
	
	\begin{proposition}\label{prop5}
		Let $p\geq 5$ with $\zeta_p(1)=1$. Suppose that $G$ is a graph with size $m$, $m\geq 1$. The following statements are true:
		\begin{enumerate}
			\item[i.] If $G$ has order $mp$, then the lexicographic product graph $G\otimes T_{p-1,1}$ is a perfect arithmetic cordial graph modulo $p$ under $\langle S_1,\zeta_p,+\rangle$, where $S_1=\{1,2,\ldots,mp^2\}$.
			\item[ii.] If $G$ has order $m$, then the cartesian product graph $G\square T_{p-1,1}$ is a perfect arithmetic cordial graph modulo $p$ under $\langle S_2,\zeta_p,+\rangle$, where $S_2=\{1,2,\ldots,mp\}$.
		\end{enumerate}
	\end{proposition}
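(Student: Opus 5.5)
This is a direct application of Corollary~\ref{cor3}, taking $H=T_{p-1,1}$, $\eta=p$ and $k=1$. Theorem~\ref{T_p-1,1} uses exactly the hypotheses $p\geq 5$ and $\zeta_p(1)=1$. It shows that $T_{p-1,1}$ is a negative arithmetic cordial graph modulo $p$ under $\langle S',\zeta_p,+\rangle$ with $S'=\{1,2,\ldots,p\}$. Its order is $p=k\eta$ with $k=1$, so it fits the framework of Corollary~\ref{cor3}.

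Since $p$ is an odd prime, $\phi(p)=p-1$ and hence $p-\phi(p)=1$. With $\eta=p$ and $k=1$, the two ratio conditions simplify as follows.
\begin{itemize}
\item Condition (\ref{corpro1}) reads $\frac{n}{m}=k^2\eta[\eta-\phi(\eta)]=p$, that is, $n=mp$.
\item Condition (\ref{corpro2}) reads $\frac{n}{m}=k[\eta-\phi(\eta)]=1$, that is, $n=m$.
\end{itemize}

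For (i), $G$ has order $n=mp$ and size $m\geq 1$, so (\ref{corpro1}) holds. Corollary~\ref{cor3} (i) then shows that $G\otimes T_{p-1,1}$ is a perfect arithmetic cordial graph modulo $p$ under $\langle S,\zeta_p,+\rangle$. Here $S=\{1,2,\ldots,nk\eta\}=\{1,2,\ldots,mp\cdot p\}=S_1$.

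For (ii), $p$ is odd, as Corollary~\ref{cor3} (ii) requires, and $n=m$ gives (\ref{corpro2}). So $G\square T_{p-1,1}$ is perfect under $\langle S,\zeta_p,+\rangle$ with $S=\{1,2,\ldots,mp\}=S_2$.

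There is essentially no obstacle. The only points to check are that $k=1$ satisfies $k\geq 1$, that $\zeta_p$ obeys Properties~\ref{pro1} and \ref{pro2} (both assumed throughout this section), and that the label sets agree, which was verified above.
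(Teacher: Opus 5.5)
Your proposal is correct and is essentially the paper's own proof. The paper likewise invokes Theorem~\ref{T_p-1,1} to obtain that $T_{p-1,1}$ is negative of order $p$, and applies Corollary~\ref{cor3} with $H=T_{p-1,1}$, $k=1$, $\eta=p$, using $p-\phi(p)=1$ to check that (\ref{corpro1}) holds for $n=mp$ and (\ref{corpro2}) holds for $n=m$; your explicit check that the label sets coincide with $S_1$ and $S_2$ is a small addition the paper leaves implicit.
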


	\begin{proof}
		Note that $T_{p-1,1}$ is a negative arithmetic cordial graph modulo $p$ under $\langle S_3,\zeta_p,+\rangle$ of order $p$, where $S_3=\{1,2,\ldots,p\}$ in accordance with Theorem~\ref{T_p-1,1} and we apply Corollary~\ref{cor3} with $H=T_{p-1,1}$. Suppose that $G$ has order $n$. Also, note that $p-\phi(p)=1$
		
		For (i), let $n=mp$. Using Eq. (\ref{corpro1}) with $n=mp$, $k=1$, and $\eta=p$, we have
		\begin{align*}
			\frac{n}{m}&=k^2\eta[\eta-\phi(\eta)]\\
			\frac{mp}{m}&=1^2p[p-\phi(p)]\\
			p&=p.
		\end{align*}
		Hence, Eq. (\ref{corpro1}) holds and by Corollary~\ref{cor3} (i), $G\otimes T_{p-1,1}$ is a perfect arithmetic cordial graph modulo $p$ under $\langle S_1,\zeta_p,+\rangle$, where $S_1=\{1,2,\ldots,mp^2\}$.
		
		For (ii), let $n=m$. Applying Eq. (\ref{corpro2}) with $n=m$, $k=1$, and $\eta=p$, we have
		\begin{align*}
			\frac{n}{m}&=k[\eta-\phi(\eta)]\\
			\frac{m}{m}&=[p-\phi(p)]\\
			1&=1.
		\end{align*}
		So, Eq. (\ref{corpro2}) is satisfied. Therefore, by Corollary~\ref{cor3} (ii), $G\square T_{p-1,1}$ is a perfect arithmetic cordial graph modulo $p$ under $\langle S_2,\zeta_p,+\rangle$, where $S_2=\{1,2,\ldots,mp\}$.
	\end{proof}
	
	Let $p\geq 5$ and set $\zeta_p(1)=1$. Suppose that $m\geq 4$. According to Proposition~\ref{prop5} (ii), the cartesian product graphs $C_m\square T_{p-1,1}$ and $T_{q,r}\square T_{p-1,1}$ with $q+r=m$, $q\geq 3$ and $r\geq 1$, are arithmetic cordial graphs modulo $p$ under $\langle S,\zeta_p,+\rangle$, where $S=\{1,2,\ldots,mp\}$.

	\begin{proposition}\label{prop6}
		Let $G$ be a graph of size $m$, $m\geq 1$, and let $J$ be a connected graph of order $k$ and size $k+1$. Suppose further that $p\geq 3$ with $\zeta_p(1)=1$ and $\zeta_p(2)=0$. The following statements are true:
		\begin{enumerate}
			\item[i.] Let $k\geq 2$. If $G$ has order $mpk^2$, then the lexicographic product graph $G\otimes (J\circ P_{p-1})$ is a perfect arithmetic cordial graph modulo $p$ under $\langle S_1,\zeta_p,+\rangle$, where $S_1=\{1,2,\ldots,mp^2k^3\}$.
			\item[ii.] Let $k\geq 2$. If $G$ has order $mk$, then the cartesian product graph $G\square (J\circ P_{p-1})$ is a perfect arithmetic cordial graph modulo $p$ under $\langle S_2,\zeta_p,+\rangle$, where $S_2=\{1,2,\ldots,mpk^2\}$.
			\item[iii.] Let $k\geq 3$. If $G$ has order $m(k-2)$, then the strong product graph $G\boxtimes (J\circ P_{p-1})$ is a perfect arithmetic cordial graph modulo $p$ under $\langle S_3,\zeta_p,+\rangle$, where $S_3=\{1,2,\ldots,mkp(k-2)\}$. 
		\end{enumerate}
	\end{proposition}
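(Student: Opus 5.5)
We apply Corollary~\ref{cor3} with $\eta=p$ and $H=J\circ P_{p-1}$, mirroring the proof of Proposition~\ref{prop5}.

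By Remark~\ref{rem2} (iii), under the hypotheses $\zeta_p(1)=1$, $\zeta_p(2)=0$, $k\geq 2$, and $J$ connected of order $k$ and size $k+1$, the graph $J\circ P_{p-1}$ is a negative arithmetic cordial graph modulo $p$ under $\langle S',\zeta_p,+\rangle$ with $S'=\{1,2,\ldots,kp\}$. Its order is $k+k(p-1)=kp$, so $H$ has the form required in Corollary~\ref{cor3}, with the parameter called $k$ there equal to the present $k$. Since $p$ is an odd prime, $\eta$ is odd, as parts (ii) and (iii) of Corollary~\ref{cor3} require. Throughout we use $p-\phi(p)=1$. Let $n$ denote the order of $G$.

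For (i), put $n=mpk^2$. Then $n/m=pk^2=k^2p[p-\phi(p)]$, which is exactly Eq.~(\ref{corpro1}). Corollary~\ref{cor3} (i) then shows that $G\otimes(J\circ P_{p-1})$ is a perfect arithmetic cordial graph modulo $p$ under $\langle S,\zeta_p,+\rangle$. Here $S=\{1,\ldots,nkp\}$ and $nkp=mp^2k^3$, which gives $S_1$.

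For (ii), put $n=mk$. Then $n/m=k=k[p-\phi(p)]$, which is Eq.~(\ref{corpro2}). Corollary~\ref{cor3} (ii) applies, with label set of size $nkp=mpk^2$, giving $S_2$.

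For (iii), with $k\geq 3$, put $n=m(k-2)$. Then $n/m=k-2=k[p-\phi(p)]-2$, which is Eq.~(\ref{corpro3}). Corollary~\ref{cor3} (iii) requires $k\geq 3$, and this matches the hypothesis. The label set has size $nkp=mkp(k-2)$, giving $S_3$.

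There is no real obstacle; the proof is a direct substitution. The only thing to watch is the hypothesis on $p$. Remark~\ref{rem2} (iii) is stated for $p\geq 3$, so it covers the whole range used here. Proposition~\ref{prop6} assumes $p\geq 3$ rather than the $p\geq 5$ used for $T_{p-1,1}$, and this is consistent. We must also keep the standing assumption that $\zeta_p$ satisfies Properties~\ref{pro1} and \ref{pro2}, which the corollaries rely on.
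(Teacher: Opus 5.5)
Your proposal is correct and follows the paper's proof essentially line for line. You use Remark~\ref{rem2}~(iii) to see that $J\circ P_{p-1}$ is a negative arithmetic cordial graph of order $kp$, verify (\ref{corpro1}), (\ref{corpro2}), and (\ref{corpro3}) via $p-\phi(p)=1$ for $n=mpk^2$, $n=mk$, and $n=m(k-2)$, and invoke Corollary~\ref{cor3}, with your explicit checks of the label-set sizes $nkp$ and of the oddness and $k\geq 3$ hypotheses being details the paper leaves implicit.
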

	
	\begin{proof}
		Let $n$ be the order of $G$. According to Remark~\ref{rem2} (iii), $J\circ P_{p-1}$ is a negative arithmetic cordial graph modulo $p$ under $\langle S_4,\zeta_p,+\rangle$ of order $kp$, where $S_4=\{1,2,\ldots,kp\}$ and we apply Corollary~\ref{cor3} with $H=J\circ P_{p-1}$. Note that $p-\phi(p)=1$.
		
		For (i), let $n=mpk^2$. Using Eq. (\ref{corpro1}) with $n=mpk^2$ and $\eta=p$, observe the following calculation:
		\begin{align*}
			\frac{n}{m}&=k^2\eta[\eta-\phi(\eta)]\\
			\frac{mpk^2}{m}&=k^2p[p-\phi(p)]\\
			pk^2&=k^2p.
		\end{align*}
		Thus, Eq. (\ref{corpro1}) holds. According to Corollary~\ref{cor3} (i), $G\otimes (J\circ P_{p-1})$ is a perfect arithmetic cordial graph modulo $p$ under $\langle S_1,\zeta_p,+\rangle$, where $S_1=\{1,2,\ldots,mp^2k^3\}$.
		
		For (ii), let $n=mk$. By applying Eq. (\ref{corpro2}) with $n=mk$ and $\eta=p$, we have
		\begin{align*}
			\frac{n}{m}&=k[\eta-\phi(\eta)]\\
			\frac{mk}{m}&=k[p-\phi(p)]\\
			k&=k.
		\end{align*}
		Hence, Eq. (\ref{corpro2}) holds and it follows that $G\square (J\circ P_{p-1})$ is a perfect arithmetic cordial graph modulo $p$ under $\langle S_2,\zeta_p,+\rangle$, where $S_2=\{1,2,\ldots,mpk^2\}$ by Corollary~\ref{cor3} (ii). 
		
		Lastly, for (iii), let $n=m(k-2)$. Take $n=m(k-2)$ and $\eta=p$ and substitute to Eq. (\ref{corpro3}). So,
		\begin{align*}
			\frac{n}{m}&=k[\eta-\phi(\eta)]-2\\
			\frac{m(k-2)}{m}&=k[p-\phi(p)]-2\\
			k-2&=k-2.
		\end{align*}
		Hence, Eq. (\ref{corpro3}) is satisfied. This means that $G\boxtimes (J\circ P_{p-1})$ is a perfect arithmetic cordial graph modulo $p$ under $\langle S_3,\zeta_p,+\rangle$, where $S_3=\{1,2,\ldots,mkp(k-2)\}$, in accordance with Corollary~\ref{cor3} (iii).
	\end{proof}
	
	The preceding propositions serve as direct and easy applications of Legendre cordial labeling \cite{Andoyo1} and logarithmic cordial labeling \cite{Andoyo3} to certain product graphs. For Legendre cordial labeling, suppose that $\zeta_p^1(a)=\frac{1+(a/p)}{2}$, where $(a/p)$ is the Legendre symbol over $p$ (see Definition 2.8 in \cite{Andoyo1}). As shown in \cite{Andoyo5}, $\zeta_p^1$ satisfies Properties~\ref{pro1} and \ref{pro2}. Clearly, $\zeta_p^1(1)=1$. In the case where $\zeta_p^1(2)=0$, assume that $p\equiv \pm 3\pmod{8}$ (see Theorem 2.4 in \cite{Andoyo3}). Hence, the graphs described in Propositions~\ref{prop1} through \ref{prop3}, Proposition~\ref{prop4} (i), and Propositions~\ref{prop5} through \ref{prop6} are all Legendre cordial graphs modulo $p$. Now, for logarithmic cordial labeling, let $\zeta_p^2(a)\equiv \ind_{\pi,p}(a)\pmod{2}$, where $\pi$ is a primitive root of $p$ (see Definition 2.4 in \cite{Andoyo3}). Thus, $\zeta_p^2$ satisfies Properties~\ref{pro1} and \ref{pro2}, as explained in \cite{Andoyo5}. Therefore, for a fixed primitive root $\pi$ of $p$, the graphs described in Proposition~\ref{prop4} (i) are logarithmic cordial graphs to the base $\pi$ modulo $p$.

\end{document}